\newtheorem{theorem}{Theorem}[section]
\newtheorem{definition}[theorem]{Definition}
\newtheorem{proposition}[theorem]{Proposition}
\newtheorem{lemma}[theorem]{Lemma}
\newtheorem{claim}[theorem]{Claim}
\newtheorem{corollary}[theorem]{Corollary}
\newtheorem{remark}[theorem]{Remark}
\newtheorem{observation}[theorem]{Observation}
\newenvironment{proof}[1][Proof]{ \noindent \textbf{#1: }}{$\Box$
\bigskip}
\begin{document}

\title{Blockers for Non-Crossing Spanning Trees in Complete
Geometric Graphs}

\author{
Chaya Keller and Micha A. Perles \\
Einstein Institute of Mathematics, Hebrew University\\
Jerusalem 91904, Israel\\
$\mathrm{\{ckeller,perles\}}$@math.huji.ac.il\\
\\
Eduardo Rivera-Campo and Virginia Urrutia-Galicia \\
Departamento de Matem$\mathrm{\acute{a}}$ticas, Universidad
Aut$\mathrm{\acute{o}}$noma Metropolitana-Iztapalapa \\
Av. San Rafael Atlixco 186,
M$\mathrm{\acute{e}}$xico D.F. 09340, Mexico\\
$\mathrm{\{erc,vug\}}$@xanum.uam.mx
}

\maketitle

\begin{abstract}
In this paper we present a complete characterization of the
smallest sets that block all the simple spanning trees (SSTs) in a
complete geometric graph. We also show that if a subgraph is a
blocker for all SSTs of diameter at most $4$, then it must block
all simple spanning subgraphs, and in particular, all SSTs. For
convex geometric graphs, we obtain an even stronger result: being
a blocker for all SSTs of diameter at most $3$ is already
sufficient for blocking all simple spanning subgraphs.
\end{abstract}

\section{Introduction}

A {\it geometric graph} is a graph whose vertices are points in
general position in the plane,\footnote{Formally, the assumption
is that an edge never contains a vertex in its relative interior.
In the case of a complete geometric graph which we consider in
this paper, this implies that the vertices are in general position
(i.e., that no three vertices lie on the same line).} and whose
edges are segments connecting pairs of vertices. Let
$G=(V(G),E(G))$ be a complete geometric graph and let
$\mathcal{F}$ be a family of subgraphs of $G$. We say that a
subgraph $B$ of $G$ \textit{blocks } $\mathcal{F}$ if it has at
least one edge in common with each member of $\mathcal{F}$. We
denote by $\mathcal{B \left( F\right)} $ the collection of all
smallest (i.e., having the smallest possible number of edges)
subgraphs of $G$ that block $\mathcal{F}$, and call its elements
\textit{blockers} of $\mathcal{F}$.

Blockers for several families of subgraphs were studied in
previous papers. For example, the set $\mathcal{B}(SPM)$ of
blockers for the family of all simple (i.e., non-crossing) perfect
matchings in a complete convex geometric graph of even order was
characterized in~\cite{Maamar-Master}, and the family of
corresponding co-blockers (i.e., $\mathcal{B(B} (SPM))$) was
characterized in~\cite{Co-blockers}. The characterizations give
raise to interesting structures, such as classes of
\emph{caterpillars}~\cite{Caterpillar0,Caterpillar1}.

In this paper we study the set $\mathcal{B}\left( SST\right) $ of
blockers for the family of simple spanning trees (SSTs) of a
complete geometric graph, and give the following characterization:
\begin{definition}
A simple spanning subgraph $B$ of a complete geometric graph $G$
is a \emph{comb} of $G$ if:
\begin{enumerate}
\item The intersection of $B$ with the boundary of
$\mathrm{conv}(G)$ is a simple path $P$. We call this path the
\emph{spine} of $B$.

\item Each vertex in $V(G) \setminus P$ is connected by a unique
edge to an interior vertex of $P$.

\item For each edge $e$ of $B$, the line $l(e)$ spanned by $e$
does not cross any edge of $B$.\footnote{Note that the line $l(e)$
avoids all vertices of $G$ except the endpoints of $e$, since
$V(G)$ is in general position.}
\end{enumerate}
\end{definition}

\begin{figure}[tb]
\begin{center}
\scalebox{0.8}{
\includegraphics{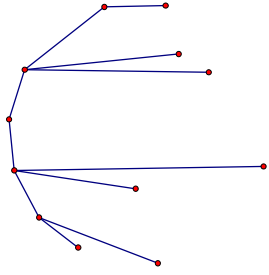}
} \caption{A comb in a non-convex geometric graph on $12$
vertices.} \label{Fig:Comb}
\end{center}
\end{figure}

Note that a comb $B$, regarded as an abstract tree, is a
caterpillar, and that the derived tree is the path $P$ with the
first and last edges removed. An example of a comb is shown in
Figure~\ref{Fig:Comb}.

\begin{theorem}\label{Thm:Main}
A graph $B$ is a blocker for the family of all simple spanning
trees of a complete geometric graph $G$ if and only if $B$ is
either a star (i.e., the set of all edges in $G$ that emanate from
a single vertex) or a comb of $G$.
\end{theorem}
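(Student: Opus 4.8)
The plan is to work with the reformulation that a subgraph $B$ blocks the family of simple spanning trees (SSTs) precisely when the geometric graph $G\setminus E(B)$ contains no SST; a blocker is then a smallest edge set whose removal kills every SST. Writing $n=|V(G)|$, I would first note that a star has $n-1$ edges and that a comb whose spine has $k$ vertices has $(k-1)$ spine edges and $n-k$ teeth, hence also $n-1$ edges; so it suffices to prove (A) every star and every comb blocks all SSTs, (B) no set of at most $n-2$ edges blocks all SSTs, and (C) every blocking set of $n-1$ edges is a star or a comb. For a star (A) is immediate, since every spanning tree, simple or not, contains an edge at the centre.

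The heart of (A) is that a comb $B$ blocks. I would argue by contradiction: suppose $T$ is an SST with $E(T)\cap E(B)=\emptyset$ and show it cannot span. The mechanism, already visible in small cases, is a cascade of forced edges. A spine endpoint, or a spine vertex carrying a tooth, has almost all of its incident edges inside $B$, so in $G\setminus E(B)$ it retains very few edges; property (3), that the line $l(e)$ spanned by each comb edge $e$ avoids $B$, is exactly what forces any surviving edge used by such a vertex to cross the (already forced) connection of a neighbouring vertex, so that $T$ must contain a crossing. I would organize this as an induction that peels from $B$ an extremal leaf --- a tooth endpoint, or a spine endpoint --- with respect to a generic direction, and checks that its removal leaves a smaller comb satisfying (1)--(3) in the convex hull of the remaining points. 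I expect this to be the main obstacle, for two reasons: the line condition (3) has to be invoked at exactly the right place, and an SST does not restrict to an SST under vertex deletion, so the inductive hypothesis has to be strengthened (for example to forbid the simple spanning \emph{forests} realizing a prescribed connection pattern) before the peeling can be pushed through.

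For the lower bound (B) I would use the dual reading: a set of at most $n-2$ edges fails to block iff $G$ minus those edges still contains an SST. I would prove that $G$ minus any $n-2$ edges has an SST by induction on $n$, peeling a convex hull vertex $v$. The key geometric fact is that the two hull edges at $v$ are never crossed by a simple tree on the remaining points, so whenever one of them survives the deletion it can be used to attach $v$ to an SST obtained inductively on $G-v$; a short accounting of the deletion budget $n-2$ against the $n-1$ edges that a spanning tree must use (choosing, when possible, a hull vertex incident to a deleted edge so that the remaining budget drops to $n-3$) guarantees a legal attachment, the bookkeeping of the remaining cases being routine.

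Finally, for (C) I would first show that a smallest blocker is a simple spanning tree: it must be spanning, since an isolated vertex would let the construction of (B) produce an avoiding SST; it has $n-1$ edges and so is a tree once connected; and it must be non-crossing, since a crossing can be eliminated by a local exchange that preserves both the number of edges and the blocking property. It then remains to show that a blocking simple spanning tree $B$ that is not a star satisfies the three comb conditions, and I would do this by contrapositive: if the intersection of $B$ with the boundary of $\mathrm{conv}(G)$ were not a single simple path, or if some off-spine vertex attached to a spine endpoint or to two spine vertices, or if condition (3) failed for some edge, then I would exhibit an explicit SST inside $G\setminus E(B)$ by rerouting $B$ across the offending configuration while staying non-crossing, contradicting that $B$ blocks. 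These rerouting constructions are the converse counterpart of the cascade argument in (A), and together they force $B$ to be a comb.
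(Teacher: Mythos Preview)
Your outline has the right three-part skeleton, but at least one step is a genuine gap and several others are too vague to constitute a proof. In part (C) you assert that a minimal blocker ``must be non-crossing, since a crossing can be eliminated by a local exchange that preserves both the number of edges and the blocking property.'' This is unjustified: if $B$ blocks all SSTs and contains crossing edges $[a,c],[b,d]$, replacing them by $[a,b],[c,d]$ (or $[a,d],[b,c]$) need not block --- an SST that happened to use $[a,c]$ need not use any of the four replacement edges. Even if some exchange did preserve blocking, that would only show \emph{some} blocker is simple, not that $B$ itself is. The paper never argues simplicity this way; instead it first proves (via two concrete constructions of avoiding diameter-$\leq 3$ trees: one from a separating line, one from an extremal-angle vertex) that every blocker is a caterpillar whose terminal spine edges are boundary edges, then pushes the entire spine onto the boundary using a diameter-$4$ avoiding tree, and only then reads off simplicity and the line condition from the leaf-edge convex-quadrilateral lemma. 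Your ``rerouting'' constructions in (C) are exactly where these specific avoiding trees are needed, and you give none.

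Part (A) is also a gap as written: you correctly identify that peeling a leaf does not restrict an SST to an SST, propose strengthening the hypothesis, but do not say to what or why the strengthened statement survives the induction. The paper's argument is entirely different and non-inductive: assuming a simple spanning subgraph $H$ misses the comb $B$, it first reduces (by passing to a minimal chord $[a_{i_0},a_{j_0}]$) to the case where $H$ contains no spine-to-spine edges, then for each spine vertex $a_i$ follows an $H$-edge to a tooth endpoint and back along its $B$-edge to some $a_{f(i)}$; since $f(i)\neq i$, there is an index with $f(i)>i$ and $f(i+1)\leq i$, and the line condition~(3) on $B$ forces the corresponding $H$-edges $[a_i,y]$ and $[a_{i+1},x]$ to cross. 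Your part (B) by hull-vertex peeling can be made to work, but the paper's direct construction (find a vertex of $\overline{G_1}$-degree $\leq 1$, rotate a ray to the first vertex in another component, and build a diameter-$\leq 3$ double star) is both shorter and yields the stronger fact that $n-2$ missing edges already leave a diameter-$\leq 3$ SST, which is what drives the rest of the classification.
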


We note that in the convex case, this characterization can be
derived by combining a result of Hernando~\cite{Hernando} that
characterizes those SSTs that meet all other SSTs, with a result
of K$\mathrm{\acute{a}}$rolyi et al.~\cite{Pach} that shows that
any two-coloring of a complete geometric graph contains a
monochromatic SST. Theorem~\ref{Thm:Main} was recently used
in~\cite{Rivera} to show that if $G$ is a complete geometric graph
with $n$ vertices in which exactly one vertex does not lie on the
boundary of $\mathrm{conv}(G)$, and $c$ is a coloring of the edges
of $G$ with $n(n-1)/2 - n + 1$ colors, then $G$ has a simple
spanning tree all of whose edges have different colors.

\medskip

\noindent We also present several refinements of
Theorem~\ref{Thm:Main}.

\medskip For a complete geometric graph $G$ and for $k \in \mathbb{N}$,
denote by $\mathcal{T}_{\leq k}\left( G\right) $ the family of all
simple spanning trees of $G$ with diameter at most $k$.
\begin{theorem}\label{Thm:General}
Let $B$ be a subgraph of a complete geometric graph $G$. If $B\in \mathcal{B}
\left( \mathcal{T}_{\leq 4}\left( G\right) \right) $,
then $B$ is either a star or a comb of $G$.
\end{theorem}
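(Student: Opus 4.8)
The plan is to deduce Theorem~\ref{Thm:General} from Theorem~\ref{Thm:Main} together with a single genuinely geometric lemma. First observe that every star and every comb blocks $\mathcal{T}_{\leq 4}(G)$: a star meets every spanning tree, since each vertex has positive degree in a tree; and a comb meets every SST by the ``if'' direction of Theorem~\ref{Thm:Main}, hence meets every SST of diameter at most $4$. Since stars and combs have exactly $n-1$ edges, the minimum number of edges in a set blocking $\mathcal{T}_{\leq 4}(G)$ is at most $n-1$, so any $B \in \mathcal{B}(\mathcal{T}_{\leq 4}(G))$ satisfies $|E(B)| \leq n-1$. I would also record the elementary constraint that, because every star lies in $\mathcal{T}_{\leq 4}(G)$, the set $B$ must meet all $n$ stars and is therefore an edge cover of $V(G)$.

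The argument then runs by contradiction. Suppose $B \in \mathcal{B}(\mathcal{T}_{\leq 4}(G))$ is neither a star nor a comb. By Theorem~\ref{Thm:Main} we have $B \notin \mathcal{B}(SST)$, and since the smallest blocking sets of $SST$ are stars and combs and hence have exactly $n-1$ edges, while $|E(B)| \leq n-1$, the set $B$ cannot block all SSTs: were it to do so it would be a blocking set of size $n-1$, i.e.\ of minimum size, and therefore a member of $\mathcal{B}(SST)$, contradicting Theorem~\ref{Thm:Main}. Consequently the complementary edge set $H := E(G) \setminus E(B)$ contains a non-crossing spanning tree $T$, and because $B$ \emph{does} block $\mathcal{T}_{\leq 4}(G)$ this tree must have diameter at least $5$. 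It therefore suffices to prove the following (the \emph{Crux Lemma}): if $|E(B)| \leq n-1$ and $H = E(G) \setminus E(B)$ contains a non-crossing spanning tree, then $H$ contains a non-crossing spanning tree of diameter at most $4$. Any such tree lies in $\mathcal{T}_{\leq 4}(G)$ and is edge-disjoint from $B$, contradicting the assumption that $B$ blocks $\mathcal{T}_{\leq 4}(G)$; so $B$ must be a star or a comb.

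For the Crux Lemma my plan is to start from a non-crossing spanning tree $T \subseteq H$ of \emph{minimum} diameter and to assume, for contradiction, that $\mathrm{diam}(T) = d \geq 5$. Fixing a diametral path $v_0 v_1 \cdots v_d$, I would reduce the diameter by a single tree exchange: add to $T$ a ``shortcut'' edge $e \in H$ joining a vertex far from the centre of $T$ to one nearer the centre, and delete from the resulting unique cycle a suitable edge of the diametral path. Provided $e$ crosses none of the surviving edges of $T$, the outcome is again a non-crossing spanning tree contained in $H$, now of strictly smaller diameter, contradicting minimality. (Alternatively, one may try to build a radius-$2$ tree in $H$ directly by an angular sweep from a well-chosen apex, but the same difficulty reappears.)

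The entire difficulty is concentrated in guaranteeing that a diameter-reducing shortcut actually lies in $H$, and this is exactly where the bound $|E(B)| \leq n-1$ must be exploited. A naive count does not suffice: a fixed non-crossing spanning tree is crossed by almost all edges of $G$, so only roughly linearly many edges are simultaneously non-crossing with $T$ --- the same order of magnitude as the forbidden set $B$ --- and an adversarial placement of the $n-1$ edges of $B$ could in principle sit on all the convenient shortcuts. I expect the main obstacle to be a careful geometric selection of the shortcut that defeats this, using the convex-hull structure: for example, choosing a diametral endpoint that is a hull vertex and arguing that the family of non-crossing edges from it (or from a near-central hull vertex) to the central portion of the path is too large and too spread out to be contained in an edge cover of size at most $n-1$. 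The interaction between this selection and the fact that the spine of a comb lives on $\partial\,\mathrm{conv}(G)$ is presumably also responsible for the bound being diameter $4$ rather than the diameter $3$ available in the convex case, since an interior vertex of $G$ cannot in general be forced into a radius-$1$ (double-star) configuration within $H$, and one extra level is then needed.
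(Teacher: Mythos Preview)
Your proposal has two genuine gaps.

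First, the reduction is circular. You invoke Theorem~\ref{Thm:Main} to conclude that a non-star, non-comb $B$ with $|E(B)|\le n-1$ cannot block all SSTs. But in this paper the ``only if'' direction of Theorem~\ref{Thm:Main} is itself obtained \emph{from} Theorem~\ref{Thm:General}: if $B\in\mathcal{B}(SST)$ then $B$ blocks $\mathcal{T}_{\le 4}$ and has the minimum size $n-1$, hence $B\in\mathcal{B}(\mathcal{T}_{\le 4})$, and Theorem~\ref{Thm:General} applies. No independent proof of Theorem~\ref{Thm:Main} for general position is given, so it is not available as an input here.

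Second, your Crux Lemma is not a simplification; it is equivalent to what you are proving. In contrapositive form it says: any set of at most $n-1$ edges that blocks $\mathcal{T}_{\le 4}$ also blocks all SSTs --- which, together with the size bound, is precisely Theorem~\ref{Thm:General} composed with Theorem~\ref{Thm:SSS}. You only sketch an edge-exchange approach and explicitly identify the obstruction (the $n-1$ forbidden edges can occupy all natural shortcuts) without resolving it. So the entire content of the theorem is still missing.

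The paper's route is completely different and direct. Using only the $\mathcal{T}_{\le 3}$ blocking property, Section~\ref{sec:diam-3} already forces $B$ to be a caterpillar whose terminal spine edges lie on $\partial\,\mathrm{conv}(G)$ (Theorem~\ref{Thm:Caterpillar}, via Observation~\ref{Obs:Basic}, Corollary~\ref{Cor1}, Observation~\ref{Obs2}). The $\mathcal{T}_{\le 4}$ hypothesis is then used exactly once, in Lemma~\ref{Obs3}: for a boundary vertex $b$ missing one of its boundary edges, delete $b$, apply Theorem~\ref{Thm:Micha} to the remaining $n-1$ vertices (where $B$ now has at most $n-3$ edges) to find a diameter-$\le 3$ SST avoiding $B$, and reattach $b$ along the missing boundary edge to obtain a diameter-$\le 4$ SST avoiding $B$. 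This forces every spine edge of $B$ onto the boundary; the remaining comb conditions then follow from Corollary~\ref{Cor1}. The key idea you are missing is this local ``remove a boundary vertex, use Theorem~\ref{Thm:Micha}, reattach by a boundary edge'' trick --- it is precisely where diameter $4$ rather than $3$ is needed, and it replaces your global shortcut search entirely.
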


In the case of complete convex geometric graphs we can replace
diameter 4 by diameter 3, as follows:

\begin{theorem}\label{Thm:Convex}
Let $B$ be a subgraph of a complete convex geometric graph $G$. If $B\in
\mathcal{B}\left( \mathcal{T}_{\leq 3}\left( G\right) \right) $, then $B$ is a
comb of $G$.
\end{theorem}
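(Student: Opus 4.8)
I will begin by unpacking $\mathcal{T}_{\le 3}(G)$, where $n=|V(G)|$. A spanning tree of diameter at most $3$ is either a spanning star (diameter $\le 2$) or a spanning \emph{double-star} of diameter $3$: a central edge $uw$ together with every remaining vertex attached as a leaf of $u$ or of $w$. Blocking all spanning stars is exactly the statement that $B$ meets every vertex, i.e.\ that $B$ is a spanning subgraph of $G$. Since, by Theorem~\ref{Thm:Main}, every comb blocks all SSTs and, being a spanning caterpillar, has exactly $n-1$ edges, a minimum blocker of $\mathcal{T}_{\le 3}$ has at most $n-1$ edges. My goal is to show such a blocker has \emph{exactly} $n-1$ edges, is a tree, and is a comb. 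I first record that in the convex setting the ``star'' alternative of Theorems~\ref{Thm:Main} and~\ref{Thm:General} is itself a comb: its two hull edges form the spine and all its chords are teeth landing at the unique interior spine vertex. Thus the single word ``comb'' legitimately absorbs the star case, which is why no separate alternative appears here.

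Next I would extract the local constraints forced by the double-stars. Fix any edge $uw$ of $G$ and list the remaining vertices in cyclic order; the chord $uw$ splits them into two arcs, and a double-star centred at $uw$ is non-crossing precisely when, on each arc, an initial segment is joined to one endpoint of $uw$ and the complementary segment to the other. Letting the two thresholds vary produces, besides the two stars, all diameter-$3$ trees on the centre $uw$. For $B$ to meet this whole family it must, for every choice of thresholds, contain $uw$, or an edge from $u$ into its designated side, or an edge from $w$ into its side. Running this ``consecutive-cover'' requirement over all hull edges and all chords constrains exactly which edges of $B$ may be incident to each vertex and in what cyclic pattern; in particular it forces $B$ to be connected, since a partition of the vertices into two cyclic arcs with no admissible $B$-edge between them would expose an unblocked double-star. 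A connected spanning subgraph on $n$ vertices has at least $n-1$ edges, so combined with the bound above, $B$ is a spanning tree with exactly $n-1$ edges.

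With $B$ established as a spanning tree, I would assemble these conditions into the comb structure. The hull edges contained in $B$ must form a single simple path $P$ (the spine): two separated arcs of hull edges, or a cyclic gap of length at least two, again leaves some threshold double-star unblocked; this gives comb condition~$1$. The tight edge budget $n-1$ then forces each vertex off $P$ to hang from $P$ by exactly one chord, and the consecutive-cover and non-crossing requirements force that chord to attach at an \emph{interior} vertex of $P$, which is comb condition~$2$. For comb condition~$3$ I would argue contrapositively: if the line $l(e)$ spanned by some $e\in B$ crossed another edge of $B$, that line would cut $V(G)$ into two nonempty cyclic arcs across which one can build a non-crossing double-star using no edge of $B$, contradicting that $B$ blocks $\mathcal{T}_{\le 3}$. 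Hence all three comb conditions hold.

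The step I expect to be the main obstacle is this last one, and more broadly the sharpening from diameter $4$ (Theorem~\ref{Thm:General}) down to diameter $3$. In general position one certifies both the interior landing of the teeth and the line condition by means of diameter-$4$ trees, i.e.\ a centre carrying two arms of length two; the crux of the convex case is that the circular order lets a \emph{single} diameter-$3$ double-star play that certifying role. Thus, each time a comb condition fails, I must manufacture one non-crossing double-star disjoint from $B$ rather than a longer tree, and the delicate point is to verify that the cyclic separation produced by the offending line or chord always leaves enough room to seat the two leaf-arcs without crossings and without reusing an edge of $B$. Once this is done, the edge count $n-1$ together with Theorem~\ref{Thm:Main} confirms that $B$ is a minimum blocker, completing the proof.
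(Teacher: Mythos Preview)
Your outline has the right high-level shape, but the connectivity step contains a genuine gap and the subsequent steps are assertions rather than arguments. When you write that disconnectedness of $B$ would give ``a partition of the vertices into two cyclic arcs with no admissible $B$-edge between them,'' you are tacitly assuming that the components of $B$ occupy contiguous arcs of the convex boundary. They need not: a disconnected spanning subgraph can have its components interleaved around the cycle (for instance one component on the even-indexed vertices and another on the odd ones). In that situation no single double-star obviously witnesses a failure to block, and your argument stalls. The paper sidesteps this entirely via an independent lemma (Theorem~\ref{Thm:Micha}, an unpublished result of Perles): any geometric graph whose complement has at most $n-2$ edges contains an SST of diameter at most $3$. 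This immediately gives $|E(B)|\ge n-1$ and, with a short extra argument (Proposition~\ref{Prop:Tree}), that $B$ is a spanning tree, without any appeal to cyclic structure.

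From the tree structure onward the paper's route also differs substantially from your sketch. Rather than arguing directly about which hull edges lie in $B$, the paper first proves, still for arbitrary point sets, that $B$ is a caterpillar whose two terminal spine edges are boundary edges (Theorem~\ref{Thm:Caterpillar}); this rests on two concrete geometric observations, one about separating a pair of leaf edges by a line (Observation~\ref{Obs:Basic}, Corollary~\ref{Cor1}) and one about the extremal-angle neighbour of a leaf (Observation~\ref{Obs2}). Only then does convexity enter: the two terminal boundary edges cut the boundary into two arcs, all leaves of $B$ are shown to lie on one arc, and a final application of the separating-line observation forces the whole spine onto the other arc. Your proposal collapses all of this into ``two separated arcs of hull edges \ldots\ leaves some threshold double-star unblocked,'' which is not yet a proof; exhibiting that double-star is precisely where the paper's observations do the real work, and your own closing paragraph concedes that this is the step you have not carried out. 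One minor logical point: citing Theorem~\ref{Thm:Main} for the upper bound on $|E(B)|$ is circular as written, since in the paper Theorem~\ref{Thm:Main} is derived from Theorem~\ref{Thm:Convex}; what you actually need, and what is independently available, is only the converse direction, Theorem~\ref{Thm:SSS}.
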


The two latter results improve Theorem~\ref{Thm:Main} by showing
that being a blocker for SSTs of diameter at most 4 (or even at
most 3 in the convex case) is sufficient for being a blocker for
all SSTs. These results are tight in the sense that
$\mathcal{T}_{\leq 4}\left( G\right) $ cannot be replaced by
$\mathcal{T}_{\leq 3}\left( G\right) $ in
Theorem~\ref{Thm:General}, as we show by an example in
Section~\ref{sec:general}, and $\mathcal{T}_{\leq 3}\left(
G\right) $ cannot be replaced by $\mathcal{T}_{\leq 2}\left(
G\right) $ in Theorem~\ref{Thm:Convex}, since any spanning
subgraph blocks all trees in $\mathcal{T}_{\leq 2}\left( G\right)
$ but not all SSTs of $G $.

\medskip

Finally, the following result improves Theorem~\ref{Thm:Main} in
the opposite direction. We say that $H \subset G$ is a
\emph{simple spanning subgraph} (SSS) of $G$ if $H$ is
non-crossing and has no isolated vertices, i.e., every vertex of
$G$ is incident to an edge of $H$.
\begin{theorem}\label{Thm:SSS}
Let $B$ be a subgraph of a complete geometric graph $G$. If $B$ is
a star or a comb of $G$, then $B$ blocks all simple spanning
subgraphs of $G$.
\end{theorem}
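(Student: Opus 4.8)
The plan is to prove the statement in its strong contrapositive form: no non-crossing subgraph $H$ of $G$ can be edge-disjoint from $B$ while covering every vertex; equivalently, any non-crossing $H$ that avoids $B$ isolates some vertex. The star case is immediate, since if $B$ is the star at $v$ then every edge incident to $v$ lies in $B$, so an $H$ with no isolated vertex must cover $v$ and hence meet $B$. So I would assume $B$ is a comb. Because the spine $P=p_0p_1\cdots p_m$ equals $B\cap\partial\,\mathrm{conv}(G)$ and is a \emph{simple path}, it must in fact run through \emph{every} hull vertex of $G$: a hull vertex outside $P$ would contribute an isolated point to $B\cap\partial\,\mathrm{conv}(G)$, destroying the path property. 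Thus $P$ uses every hull edge except exactly one, which I denote $p_0p_m$, and each interior vertex $u$ carries a single tooth to an interior spine vertex $\phi(u)$.

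I would dispose of the convex case first, where $V(G)=\{p_0,\dots,p_m\}$ and $B$ is the boundary path. Here $H$ consists of diagonals together with possibly the single hull edge $p_0p_m$. I would extend $H$ to a triangulation $T$ of the polygon by adding non-crossing diagonals. Every triangulation of a convex polygon has at least two ears, and the two diagonals $p_1p_m$ and $p_0p_{m-1}$ needed to make both $p_0$ and $p_m$ ear tips cross each other; hence at least one ear tip $p_i$ has $1\le i\le m-1$. An ear tip has degree $2$ in $T$, incident only to the hull edges $p_{i-1}p_i$ and $p_ip_{i+1}$, and has no incident diagonal. Both of these hull edges are spine edges of $B$ (they equal the omitted edge $p_0p_m$ only when $i\in\{0,m\}$), so since $H\subseteq T$ and $H\cap B=\emptyset$, the vertex $p_i$ is isolated in $H$ — the desired contradiction.

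To reach the general case I would reduce to the convex one using comb property (3). The point is that the spine edges are hull edges, so for any interior vertex $u$ the line $l(u\,\phi(u))$, continued from $u$ away from $\phi(u)$, must leave $\mathrm{conv}(G)$ through some boundary edge; if that edge were a spine edge the line would cross $B$, contradicting property (3). Hence the line must exit through the unique non-spine hull edge $p_0p_m$, which forces every interior vertex to lie in the cone from its spine vertex toward $p_0p_m$ (concretely, $u$ lies in the triangle $p_0\,\phi(u)\,p_m$). I would use this to ``straighten'' the interior vertices onto the boundary near $p_0p_m$ — or, equivalently, to run an induction that deletes a tooth tip $u$, observing that $B-u$ is again a comb of $G-u$, while tracking the at most one vertex that $u$ could have been covering in $H$.

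The hard part will be exactly this last reduction. The obstruction is that an interior vertex $u$ can cover a hull vertex $z$ through the non-tooth edge $uz$, which is not in $B$, so the convex ear argument no longer automatically isolates a spine vertex; deleting $u$ or projecting it to the boundary risks either breaking the comb structure or spuriously re-covering the vertex the convex argument relies on trapping. I expect property (3), in the sharpened form above (every tooth line escapes through $p_0p_m$, so no two teeth can be straightened into a crossing and no tooth can shield a spine vertex), to be precisely the hypothesis that makes the projection consistent, after which the convex ear argument applies and exhibits an isolated vertex, contradicting that $H$ covers $V(G)$.
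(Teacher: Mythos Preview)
Your treatment of the star case and of the convex case is correct, and the ear argument for convex position is a pleasant alternative to what the paper does. However, the general case is not proved in your proposal, and the two reductions you sketch both have real obstacles. The induction by deleting a tooth tip $u$ does \emph{not} leave ``at most one'' uncovered vertex: $u$ may be the sole $H$-neighbour of arbitrarily many vertices (a non-crossing star at $u$ is allowed), so $H-u$ need not be spanning in $G-u$, and you have no way to repair this. The projection idea---pushing each interior vertex $u$ to the point where $l(u,\phi(u))$ meets $p_0p_m$---is also not justified: property~(3) guarantees that the \emph{teeth} do not cross and that their extensions escape through $p_0p_m$, but it says nothing about the $H$-edges, and there is no reason a priori that two disjoint $H$-edges remain disjoint after projection. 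In short, you have correctly identified that property~(3) confines every interior vertex to the triangle $p_0\,\phi(u)\,p_m$, but you have not turned this into an argument.

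The paper's proof is quite different and does not split into convex/non-convex cases. First it reduces (by passing to the region cut off by a shortest such chord) to the situation where $H$ contains no edge $[a_i,a_j]$ between two spine vertices other than possibly the missing hull edge. Then, for each interior spine vertex $a_i$ it picks an $H$-edge $[a_i,y]$; necessarily $y$ is a leaf of $B$ with tooth $[y,a_{f(i)}]$, giving a map $f\colon\{0,\dots,k\}\to\{0,\dots,k\}$ with $f(i)\neq i$. Since $f(0)>0$ and $f(k)<k$, there is an index $i$ with $f(i)>i$ and $f(i+1)\le i$. Writing $[a_i,y]$ and $[a_{i+1},x]$ for the chosen $H$-edges, one checks that the polygonal path $x,a_{f(i+1)},a_{f(i+1)+1},\dots,a_{f(i)},y$ lies on the boundary of its own convex hull (using property~(3) only for the two teeth $[a_{f(i+1)},x]$ and $[a_{f(i)},y]$), which forces the diagonals $[a_i,y]$ and $[a_{i+1},x]$ to cross---contradicting the simplicity of $H$. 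This discrete intermediate-value step is the missing idea in your write-up.
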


The paper is organized as follows: In
Section~\ref{sec:definitions} we give precise definitions and
notations used throughout the paper. In Section~\ref{sec:diam-3}
we prove properties of blockers for $\mathcal{T}_{\leq 3}\left(
G\right) $ common to the general case and the convex case. In
Sections~\ref{sec:convex} and~\ref{sec:general} we prove
Theorems~\ref{Thm:Convex} and~\ref{Thm:General}, respectively, and
in Section~\ref{sec:other-direction} we complete the proof of
Theorem~\ref{Thm:Main} and prove Theorem~\ref{Thm:SSS} by showing
that $\mathcal{B \subset B}\left( SSS\right) $ for any complete
geometric graph $G$, where $\mathcal{B}$ denotes the family of all
combs of $G$.

\section{Definitions and Notations}
\label{sec:definitions}

In this section we present some definitions and notations we use
in the paper.

\medskip

\noindent \textbf{Geometric graphs.} Throughout the paper, $G$ is
a complete geometric graph on $n$ vertices. The sets of vertices
and edges of $G$ are denoted by $V(G)$ and $E(G)$, respectively.
The convex hull of $V(G)$ is denoted by $\mathrm{conv}(G)$.
Vertices in $V(G)$ and edges in $E(G)$ that lie on the boundary of
$\mathrm{conv}(G)$ are called {\it boundary vertices} and {\it
boundary edges} of $G$, respectively. A geometric graph is {\it
simple} if it does not contain a pair of crossing edges. For more
information on geometric graphs, the reader is referred
to~\cite{Pach2}.

\medskip

\noindent \textbf{Caterpillars.} Throughout the paper, $T$ is a
tree. A tree $T$ is a {\it caterpillar} if the derived graph $T'$
(i.e., the graph obtained from $T$ by removing all leaves and
their incident leaf edges) is a path (or is empty). A longest path
in a caterpillar $T$ is called a {\it spine} of $T$. (Note that
any edge of $T$ either belongs to every spine or is a leaf edge of
$T$.) If the diameter of $T$ is 3, then $T$ contains an edge
$[x,y]$ such that each vertex in $V(T)$ is at distance at most 1
from either $x$ or $y$. Such an edge $[x,y]$ is called the {\it
central edge} of $T$. (Note that any tree of diameter 3 is a
caterpillar.)

\medskip

\noindent \textbf{General notations in the plane.} Throughout the
paper, $l$ denotes a line. Each line $l$ partitions the plane into
open half-planes. We denote them by $l^{+}$ and $l^{-}$, and call
them the {\it sides} of the line. The unique line that contains
two points $a,b \in \mathbb{R}^2$ is denoted by $l(a,b)$. The
complement of a set $A$ in $\mathbb{R}^2$, i.e., the set
$\mathbb{R}^2 \setminus A$, is denoted by $A^c$.


\section{Some Properties of Blockers for SSTs of Diameter at Most $3$}
\label{sec:diam-3}

In this section we establish several properties of blockers for
SSTs of diameter at most $ 3$. First we show that the number of
edges in these blockers is $n-1$ (where $n$ is the number of
vertices in $G$), and then we show that all such blockers are
caterpillars.

\subsection{The Size of the Blockers}
\label{sec:sub:size}

\begin{proposition}\label{Prop:size}
Let $G$ be a complete geometric graph on $n$ vertices. Then the
size (i.e., number of edges) of the blockers for SSTs of diameter
at most $ 3$ in $G$ is $n-1$.
\end{proposition}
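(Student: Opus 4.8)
The plan is to prove the two inequalities separately: first that some blocker has exactly $n-1$ edges (so the minimum blocker size is at most $n-1$), and then that every blocker has at least $n-1$ edges.

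For the upper bound I would simply exhibit a star. Fix $x\in V(G)$ and let $S_x$ be the set of all $n-1$ edges of $G$ emanating from $x$. Then $S_x$ is a non-crossing spanning tree of diameter at most $2$, hence itself a member of $\mathcal{T}_{\leq 3}(G)$; more to the point, every simple spanning tree $T$ is connected and spanning, so $x$ has degree at least $1$ in $T$, and therefore $T$ shares an edge with $S_x$. Thus $S_x$ blocks $\mathcal{T}_{\leq 3}(G)$ (in fact it blocks every spanning connected subgraph), and the minimum blocker size is at most $n-1$.

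For the lower bound, let $B$ be an arbitrary blocker; I would show that $B$ must be a \emph{connected spanning} subgraph of $G$, whence $|E(B)|\geq n-1$. That $B$ is spanning is immediate: for each $v\in V(G)$ the star $S_v$ lies in $\mathcal{T}_{\leq 3}(G)$, so $B$ meets $S_v$ in an edge incident to $v$, and hence no vertex is isolated in $B$. The substantive claim is connectivity. Suppose it fails, and fix a partition $V(G)=V_1\sqcup V_2$ into nonempty parts with no edge of $B$ joining them. Then every edge of $G$ crossing this cut lies in the complement $G\setminus E(B)$, and it would suffice to produce a non-crossing spanning tree of diameter at most $3$ all of whose edges avoid $E(B)$ (for instance, cross the cut), contradicting that $B$ blocks $\mathcal{T}_{\leq 3}(G)$.

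The main obstacle is exactly this geometric construction, and it is where the diameter restriction bites: a tree of diameter at most $3$ has at most two non-leaf vertices (a single hub when the diameter is $\leq 2$, or the two endpoints $x,y$ of a central edge when it is $3$), so the forbidden edges of $B$ must all be dodged using only two hubs while the tree stays non-crossing. I would set this up by choosing $x$ to be a vertex of $\mathrm{conv}(G)$ and sorting the remaining vertices angularly around $x$ as $p_1,\dots,p_{n-1}$, so that the edges $[x,p_i]$ form a non-crossing fan; starting from the star centered at $x$, I would reroute the leaves whose edge to $x$ is forbidden by attaching a consecutive angular block of them to a single second hub $y$, chosen so that the $y$-edges neither cross the $x$-fan nor lie in $E(B)$, and so that the diameter remains at most $3$. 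Verifying that such a hub $y$ and block can always be found (using that all edges across the cut are available) is the delicate point; once it is established, $B$ is forced to be connected and the bound $|E(B)|\geq n-1$ follows, matching the upper bound.
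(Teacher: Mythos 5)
Your upper bound is fine and is exactly the paper's: a star has $n-1$ edges and meets every spanning connected subgraph, so the minimum blocking size is at most $n-1$. The problem is the lower bound. Your entire argument funnels into the geometric construction that you explicitly defer (``the delicate point''), and that construction \emph{is} the substance of Proposition~\ref{Prop:size}: in the paper it is a separate theorem (Perles' unpublished result, Theorem~\ref{Thm:Micha}) that a geometric graph whose complement has at most $n-2$ edges contains an SST of diameter at most $3$. As written, your proposal proves only the easy half and leaves the essential step unproved.

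Moreover, the route you sketch runs into concrete obstructions, so this is not just a matter of filling in details. First, the parenthetical plan of taking a diameter-$\le 3$ tree all of whose edges cross the cut $(V_1,V_2)$ cannot work in general. Such a tree must be a double star: a central edge $[x,y]$ with $x\in V_1$, $y\in V_2$, all of $V_2\setminus\{y\}$ joined to $x$, and all of $V_1\setminus\{x\}$ joined to $y$. Now take $8$ points in convex position, labelled $1,\dots,8$ cyclically, with $V_1$ the odd and $V_2$ the even points. By rotational symmetry one may take $x=1$, and then for each choice of $y$ there are $w\in V_2$, $v\in V_1$ with $x,w$ separating $y,v$ on the circle, so that $[x,w]$ and $[y,v]$ cross: for $y=2,4,6,8$ take $(w,v)=(4,5),(6,7),(4,3),(6,3)$ respectively. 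Hence \emph{no} non-crossing diameter-$\le 3$ spanning tree using only cut edges exists for this configuration; equivalently, the union of the two complete graphs on $V_1$ and on $V_2$ is a disconnected spanning subgraph that blocks all of $\mathcal{T}_{\le 3}(G)$. So the statement your construction would need --- ``disconnected and spanning implies not blocking'' --- is simply false; any correct argument must use the edge bound $|E(B)|\le n-1$ in an essential way, and your connectivity step never does. Second, your fan construction around an arbitrary hull vertex $x$ also fails: nothing controls the $B$-degree of $x$, so the leaves whose edges $[x,p_i]$ lie in $E(B)$ may be numerous and angularly scattered among the permitted ones, and a single second hub cannot pick up scattered leaves without crossing the retained fan edges. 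The paper resolves both difficulties simultaneously: the edge bound forces $B$ to have a connected component $A$ that is a tree; one takes $x$ to be a leaf of $A$ (so at most \emph{one} fan edge at $x$ is forbidden) and finds the second hub by rotating the ray from $x$ through its unique $B$-neighbour until it first hits a vertex $b\notin A$. All vertices in the swept cone lie in $A$ and are joined to $b$, and those edges avoid $B$ precisely because $b$ lies in a different component of $B$. Both ingredients --- the degree-one choice of $x$ and the use of the component structure of $B$ itself rather than an arbitrary cut --- are missing from your sketch.
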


As any star blocks all SSTs of diameter at most $ 3$ (and actually
even all spanning subgraphs), the size of the blockers is at most
$n-1$. The other inequality is a consequence of the following
unpublished result of Perles~\cite{Micha}.
\begin{theorem}\label{Thm:Micha}
Let $G_1$ be a geometric graph on $n$ vertices. If
$|E(\overline{G_1})| \leq n-2$, then $G_1$ includes an SST of
diameter at most $ 3$. ($\overline{G_1}$ denotes the graph
complementary to $G_1$, on the same set of vertices.)
\end{theorem}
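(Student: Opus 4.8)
The plan is to prove Theorem~\ref{Thm:Micha} by exhibiting, inside $G_1$, a non-crossing spanning \emph{double star} (a tree of diameter at most $3$), built from a line that separates its two centers. Write $M=E(\overline{G_1})$ for the set of missing edges, so $|M|\le n-2$, and for a vertex $v$ let $N_M(v)$ be the set of vertices joined to $v$ by a missing edge.

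The geometric engine is an easy realization lemma, which I would isolate first. Suppose $\ell$ is a line avoiding $V(G)$ that splits $V(G)$ into $V^{+}$ and $V^{-}$, and suppose there are vertices $z\in V^{+}$ and $y\in V^{-}$ such that $z$ is joined in $G_1$ to every other vertex of $V^{+}$, $y$ is joined in $G_1$ to every other vertex of $V^{-}$, and $\{z,y\}\in E(G_1)$. Then the tree with central edge $\{z,y\}$, in which $z$ is adjacent to all of $V^{+}$ and $y$ to all of $V^{-}$, is a non-crossing spanning tree of $G_1$ of diameter at most $3$. Non-crossingness is immediate: every edge at $z$ lies in the open half-plane $\ell^{+}$ and every edge at $y$ in $\ell^{-}$, so these cannot cross one another, while the central edge shares an endpoint with each remaining edge. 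Thus it suffices to produce a line $\ell$ and centers $z\in\ell^{+}$, $y\in\ell^{-}$ with $N_M(z)\subseteq V^{-}$, $N_M(y)\subseteq V^{+}$, and $y\notin N_M(z)$; that is, each chosen center must have all of its missing partners on the \emph{far} side of $\ell$.

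Next I would dispose of the degenerate case and set up the counting. If some vertex $z$ has $N_M(z)=\emptyset$, then the star at $z$ is already a non-crossing spanning tree of diameter at most $2$, and we are done; note also that since $|M|\le n-2<n-1$ the graph $\overline{G_1}$ is disconnected, which will guarantee for free that any two centers taken from distinct components satisfy $\{z,y\}\notin M$. Otherwise every vertex has $M$-degree at least $1$; combining this with $\sum_v \deg_M(v)=2|M|\le 2n-4$ shows that the number of vertices with $\deg_M(v)\ge 2$ is at most $n-4$, so at least four vertices have $M$-degree \emph{exactly} one. For such a vertex $z$, with unique missing partner $z'$, the condition $N_M(z)\subseteq V^{-}$ simply says that $z'$ lies on the opposite side of $\ell$ from $z$.

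The heart of the proof --- and the step I expect to be the main obstacle --- is then the geometric selection of the two centers. Concretely, I must find two vertices $z,y$ of $M$-degree one, with partners $z'\ne y'$ and $y\ne z'$, $z\ne y'$, such that the two cross-segments $[z,y']$ and $[y,z']$ are disjoint; the realization lemma then applies to any line separating $\{z,y'\}$ from $\{y,z'\}$. I would attack this by an extremal/convexity argument among the (at least four) degree-one vertices: for instance, take $z$ extreme in some direction (or a vertex of the convex hull of the relevant points), place $z'$ on the far side by choosing the orientation of $\ell$, and then rotate or sweep $\ell$ to capture a second degree-one vertex $y$ whose partner $y'$ falls on $z$'s side, ruling out the coincidence $z'=y'$ by a short case analysis. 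Making this selection work in \emph{every} configuration, rather than merely generically, is the delicate point: a naive fixed direction can place the four relevant points so that the cross-segments are forced to meet, and it is the convex-position/extremality of the chosen centers that I expect to break this deadlock.
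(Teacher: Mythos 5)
Your framework (a line-separated double star inside $G_1$) starts soundly: the realization lemma is correct, the degree-zero case is handled properly, and the count showing at least four vertices of $M$-degree exactly one is right. But the proof is incomplete exactly at the step you flag as the main obstacle, and the difficulty there is not merely ``delicate'' --- the concrete selection task you set yourself is impossible in some configurations, so no extremal or sweeping argument can complete the proof in the form you propose. Take $n=6$ and let $\overline{G_1}$ be two stars: $p=(0,0)$ joined to $z_1=(0.1,1)$ and $z_2=(0.3,1.5)$, and $q=(10,0)$ joined to $y_1=(9.9,1)$ and $y_2=(9.7,1.5)$ (these six points are in general position). Then $|M|=4=n-2$, every vertex has $M$-degree at least one, and the vertices of $M$-degree exactly one are $z_1,z_2,y_1,y_2$, with partners $z_i'=p$ and $y_j'=q$. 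Any admissible pair with distinct partners must be some $(z_i,y_j)$, and the cross-segments you need to be disjoint are $[z_i,q]$ and $[y_j,p]$. But for every choice of $i,j$ the four points $p,q,y_j,z_i$ are in convex position in that cyclic order, and $[z_i,q]$, $[y_j,p]$ are precisely the two diagonals of that convex quadrilateral, so they always cross. Hence in this configuration there is no admissible pair at all under your constraints, and your selection step cannot be carried out.

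The example also shows what must be relaxed: the second center cannot be required to have $M$-degree one. Here a line-based double star does exist --- take centers $z_1$ and $q$, separated by the line $y=0.5$: then $N_M(z_1)=\{p\}$ lies below it, $N_M(q)=\{y_1,y_2\}$ lies above it, and $[z_1,q]\in E(G_1)$ --- but $q$ has $M$-degree two, which your counting-based selection excludes. The paper's proof sidesteps the whole issue by choosing the two centers from \emph{different components} of $\overline{G_1}$ and replacing the separating line by a separating convex \emph{cone}. Since $|E(\overline{G_1})|<n$, some component $A$ of $\overline{G_1}$ is a tree; take a leaf $x$ of $A$ (this is your degree-one vertex) with unique missing partner $y$, rotate the ray $\vec{xy}$ about $x$ until it first hits a vertex $b\notin A$, and let $H$ be the swept cone. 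The tree joining $x$ to all vertices outside $H$ and $b$ to all vertices inside $H$ is non-crossing because $H$ is convex with apex $x$; it avoids $\overline{G_1}$ at $x$ because $x$'s unique missing edge $[x,y]$ ends inside $H$; and it avoids $\overline{G_1}$ at $b$ because every vertex of $H$ other than $b$ belongs to $A$ while $b$ lies in a different component --- no degree condition on $b$ is needed. Importing one of these two ideas (a cone in place of the line, or a component argument that lets one center have arbitrary $M$-degree) is the missing ingredient; with only degree-one centers and a straight separating line, the statement you are trying to prove is false.
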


Theorem~\ref{Thm:Micha} implies that a set of at most $ n-2$ edges
cannot block all the SSTs of diameter at most $ 3$, since its
complement includes such an SST. Thus, the size of blockers is at
least $n-1$, which completes the proof of
Proposition~\ref{Prop:size}. For the sake of completeness we
present here the proof of Theorem~\ref{Thm:Micha}.

\medskip

\begin{proof}
Since $|E(\overline{G_1})| \leq n-2$, the following two statements
hold:
\begin{enumerate}
\item $\overline{G_1}$ has a vertex of degree 0 or 1.

\item $\overline{G_1}$ is not connected.
\end{enumerate}

Since the number of edges in $\overline{G_1}$ is smaller than the
number of vertices, at least one connected component of
$\overline{G_1}$ is a tree. Denote one such component by $A$. If
$A$ is a single vertex $x'$, then $G_1$ contains the star centered
at $x'$, which is an SST of diameter 2 (assuming $n \geq 2$).
Otherwise, $A$ has a leaf $x$. Denote the (only) neighbor of $x$
in $A$ by $y$.

Consider the ray $\vec{xy}$, and turn it around $x$ until it hits
a vertex $b \not \in A$ for the first time. (There must be a
vertex $b \not \in A$, since $\overline{G_1}$ is not connected,
and thus $A \neq V(G)$. Moreover, $b$ is unique, since a ray
emanating from $x$ cannot contain two other vertices of $G$.)
Denote by $H$ the closed convex cone bounded by the rays
$\vec{xy}$ and $\vec{xb}$.

\begin{figure}[tb]
\begin{center}
\scalebox{0.8}{
\includegraphics{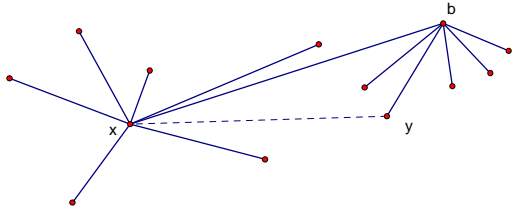}
} \caption{An SST of diameter 3 that avoids $\overline{G_1}$.}
\label{Fig:Micha}
\end{center}
\end{figure}

Let $T$ be the subgraph of $G$ whose edges are
\[
\{[x,z]:z \in V(G) \cap H^{c}\} \cup \{[b,w]:w \in (V(G) \setminus
\{b\}) \cap H\},
\]
as illustrated in Figure~\ref{Fig:Micha}. It is clear by the
construction that $T$ is an SST of diameter 3 (with central edge
$[x,b]$), or of diameter 2 (if $V(G) \subset H$). We claim that $T
\subset G_1$. Indeed, the edges $[x,z]$, where $z \in H^c \cap
V(G)$ are all in $G_1$, as the only edge in $\overline{G_1}$ that
contains $x$ is $[x,y]$. The edges $[b,w]$, where $w \in H \cap
(V(G) \setminus \{b\})$ are also in $G_1$, since the vertices $\{w
: w \in (V(G) \setminus \{b\}) \cap H\}$ belong to $A$, whereas
$b$ belongs to another connected component of $\overline{G_1}$.
Therefore, $T \subset G_1$, which completes the proof.
\end{proof}

We note that the proof of Theorem~\ref{Thm:Micha} implies a
stronger statement:
\begin{proposition}\label{Prop:Tree}
Let $G$ be a complete geometric graph on $n$ vertices. Then the
blockers for SSTs of diameter at most $ 3$ in $G$ are spanning
trees.
\end{proposition}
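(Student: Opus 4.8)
The plan is to extract more information from the proof of Theorem~\ref{Thm:Micha} than merely the existence of a blocking edge. Proposition~\ref{Prop:Tree} asserts that every blocker $B$ for $\mathcal{T}_{\leq 3}(G)$ is a spanning tree. By Proposition~\ref{Prop:size} we already know $|E(B)| = n-1$, so a spanning tree on $n$ vertices has exactly the right number of edges; it therefore suffices to prove two things: that $B$ is spanning (no isolated vertices) and that $B$ is connected. Since a graph on $n$ vertices with $n-1$ edges is a tree precisely when it is connected (equivalently, acyclic), and connectivity forces spanning, the real content is to rule out the two failure modes: an isolated vertex, and a disconnection into two or more components.

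The key observation is that the proof of Theorem~\ref{Thm:Micha} produces, from any geometric graph $G_1$ with $|E(\overline{G_1})| \leq n-2$, an explicit SST $T$ of diameter at most $3$ contained in $G_1$. I would apply this in the contrapositive, taking $G_1 = \overline{B}$ so that $\overline{G_1} = B$ with $|E(B)| = n-1$. First I would handle the isolated-vertex case: suppose some vertex $x$ of $G$ is isolated in $B$. Then I would look at the component structure of $B$ and show that deleting an appropriate edge still leaves a set of fewer than $n-1$ edges whose complement, by Theorem~\ref{Thm:Micha}, contains a diameter-$\leq 3$ SST avoiding $B$ --- contradicting that $B$ blocks. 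More directly, if $B$ has an isolated vertex then $B$ has at most $n-1$ edges spread over at most $n-1$ non-isolated vertices, forcing a cycle; I would then argue that one can delete an edge on that cycle without creating a new isolated vertex, reducing to $n-2$ edges and invoking Theorem~\ref{Thm:Micha} to find an SST in the complement.

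The cleanest route, which I would pursue as the main line of argument, is to re-run the construction in the proof of Theorem~\ref{Thm:Micha} and track when the produced tree $T$ can fail to be blocked by $B$. Recall that in that proof $\overline{G_1} = B$ has at most $n-2$ edges only when we are allowed to delete one edge of $B$; the construction selects a component $A$ of $B$ that is a tree, picks a leaf $x$ of $A$ (or an isolated vertex, handled separately), and builds an SST $T$ of diameter $\leq 3$ all of whose edges avoid $B$ except possibly the single edge $[x,y]$ of $B$ used to orient the cone. Since $B$ is a blocker, $T$ must share an edge with $B$, and by construction the only candidate is $[x,y]$; hence every SST the construction can produce uses exactly the edge $[x,y]$, and I would leverage this rigidity to pin down the global structure. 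The main obstacle will be organizing the case analysis so that the freedom in choosing $x$, $y$, and the rotation direction of the ray $\vec{xy}$ collectively forces $B$ to be both connected and acyclic: I must argue that if $B$ were disconnected or contained a cycle, there would be enough freedom in the construction to produce a diameter-$\leq 3$ SST meeting $B$ in no edge at all, contradicting the blocking property. Assembling these constraints into a contradiction in every case is the crux of the argument.
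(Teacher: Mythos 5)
Your overall strategy --- re-running the construction from the proof of Theorem~\ref{Thm:Micha} on $B$ itself when $B$ fails to be a tree --- is exactly the paper's strategy, but your execution rests on a misreading of that construction, and the misreading is what makes you believe a further, never-completed case analysis is needed. In the proof of Theorem~\ref{Thm:Micha}, the cone $H$ is \emph{closed} and bounded by the rays $\vec{xy}$ and $\vec{xb}$, so $y \in H$; consequently $y$ is joined in $T$ to $b$, not to $x$, and $[x,y]$ is \emph{not} an edge of $T$ at all. Moreover $[b,y] \notin E(B)$, since $y$ lies in the tree component $A$ while $b$ lies in a different component. So the constructed $T$ avoids $B$ \emph{entirely}: there is no ``exceptional edge'' $[x,y]$, hence no rigidity to exploit and no constraints to assemble. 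The only hypotheses the construction actually uses are (i) $B$ is disconnected, and (ii) some component of $B$ is a tree. If $|E(B)| = n-1$ (Proposition~\ref{Prop:size}) and $B$ is not a tree, then $B$ is disconnected (a connected graph on $n$ vertices with $n-1$ edges is a tree), and since $n-1 < n$ at least one component is a tree; so both hypotheses hold, the construction outputs a diameter-$\leq 3$ SST disjoint from $B$, and blocking is contradicted. That single observation is the whole proof. The ``crux'' you defer to the end does not exist, and as written your proposal never reaches a contradiction --- it is an incomplete plan built on a false description of what the construction produces.

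Your handling of the isolated-vertex case has a separate gap: if you delete a cycle edge $e$ from $B$ and apply Theorem~\ref{Thm:Micha} to $B$ minus $e$ (which has $n-2$ edges), the resulting SST is only guaranteed to avoid $B$ minus $e$; it may very well use the edge $e \in E(B)$, so no contradiction with $B$ being a blocker follows. The correct argument here is the trivial one the paper uses: if $x$ is isolated in $B$, then the star centered at $x$ is an SST of diameter $2$ that avoids $B$ outright.
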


\begin{proof}
Let $B$ be a blocker for SSTs of diameter at most $ 3$ in $G$. By
Proposition~\ref{Prop:size}, $|E(B)|=n-1$. It is clear that $B$ is
a spanning subgraph of $G$ without isolated vertices, since
otherwise it avoids a star which is an SST of diameter 2. If $B$
is not a tree, then the two statements at the beginning of the
proof of Theorem~\ref{Thm:Micha} clearly hold (i.e., $B$ has a
vertex of degree 0 or 1, and is not connected). Thus, by the proof
of Theorem~\ref{Thm:Micha}, $\bar{B}$ contains an SST of diameter
at most $ 3$, a contradiction.
\end{proof}

\subsection{The Blockers are Caterpillars}
\label{sec:sub:caterpillar}

In order to further characterize the blockers, we use two
observations.
\begin{observation}\label{Obs:Basic}
Let $B$ be a subgraph of $G$. Assume there exist two vertices $a,b
\in V(G)$ and a line $l$, such that:
\begin{enumerate}
\item $[a,b] \not \in E(B)$, and

\item $a$ and all neighbors of $b$ in $B$ lie on one (open) side
$l^{+}$ of $l$, while $b$ and all neighbors of $a$ in $B$ lie on
the other (open) side $l^{-}$ of $l$.
\end{enumerate}
Then $B \not \in \mathcal{B(T}_{\leq 3})$.
\end{observation}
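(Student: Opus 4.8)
The plan is to exhibit a single SST of diameter at most $3$ that shares no edge with $B$; this will show that $B$ fails to block $\mathcal{T}_{\leq 3}$, and hence cannot lie in $\mathcal{B}(\mathcal{T}_{\leq 3})$. The natural candidate is the double star with central edge $[a,b]$, in which every vertex of $l^{+}$ is joined to $a$ and every vertex of $l^{-}$ is joined to $b$. I would first reduce to the case where no vertex of $G$ lies on $l$, and then verify that this double star is simple, spanning, of diameter at most $3$, and disjoint from $B$.

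For the reduction I would use that the four placement conditions --- $a \in l^{+}$, $b \in l^{-}$, and the two conditions that all neighbors of $a$ in $B$ lie in $l^{-}$ while all neighbors of $b$ in $B$ lie in $l^{+}$ --- are all strict, so every vertex of the finite set $S$ consisting of $a$, $b$, and all these neighbors sits at positive distance from $l$. Translating $l$ by a small amount $t>0$ along its normal produces a parallel line $l'$; taking $t$ below $\min_{s\in S}\mathrm{dist}(s,l)$ preserves all four conditions, and taking $t$ outside the finite set $\{\mathrm{dist}(w,l):w\in V(G)\}$ ensures that $l'$ avoids every vertex. After relabeling I may therefore assume $l$ itself misses all vertices, so $V(G)$ splits as $V^{+}=V(G)\cap l^{+}$ and $V^{-}=V(G)\cap l^{-}$ with $a\in V^{+}$ and $b\in V^{-}$.

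I would then set $T$ to consist of the edge $[a,b]$ together with $[a,v]$ for every $v\in V^{+}\setminus\{a\}$ and $[b,w]$ for every $w\in V^{-}\setminus\{b\}$. An edge count gives $|E(T)|=n-1$ and connectivity is clear, so $T$ is a spanning tree; since every vertex is within distance $1$ of $a$ or of $b$, its diameter is at most $3$. The geometric heart is simplicity: each edge at $a$ lies entirely in the open half-plane $l^{+}$ and each edge at $b$ lies entirely in $l^{-}$, so edges on opposite sides are disjoint; two edges sharing $a$ (or sharing $b$) emanate from a single point and so cannot cross; and $[a,b]$ shares an endpoint with every other edge of $T$. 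Disjointness from $B$ is then immediate: $[a,b]\notin E(B)$ by hypothesis, $[a,v]\notin E(B)$ because $v\in l^{+}$ while every neighbor of $a$ in $B$ lies in $l^{-}$, and symmetrically $[b,w]\notin E(B)$.

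The one delicate point is the perturbation step, which is needed precisely because the hypothesis does not rule out stray vertices of $G$ lying on $l$; once $l$ has been cleared of vertices, every remaining claim is a direct verification. I do not anticipate any further obstacle, since the separation of the two stars into disjoint open half-planes simultaneously delivers the non-crossing property and the avoidance of $B$.
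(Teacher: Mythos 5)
Your proposal is correct and takes essentially the same approach as the paper: both exhibit the double star with central edge $[a,b]$ (each vertex of $l^{+}$ joined to $a$, each vertex of $l^{-}$ joined to $b$) and verify it is a simple spanning tree of diameter at most $3$ avoiding $B$. The only difference is cosmetic: where you perturb $l$ to clear it of vertices, the paper simply assigns any vertices lying on $l$ to $a$'s star by using $V(G)\cap(l^{+}\cup l)$ in the construction.
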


\begin{figure}[tb]
\begin{center}
\scalebox{0.8}{
\includegraphics{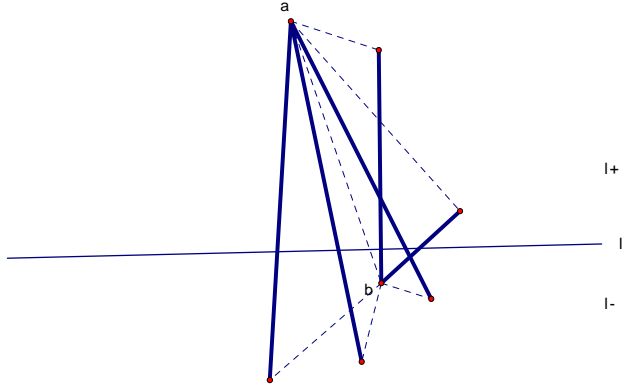}
} \caption{An illustration of the proof of
Observation~\ref{Obs:Basic}. The edges of $B$ are depicted by
heavy lines, and the edges of $T$ by dotted lines.}
\label{Fig:Observation1}
\end{center}
\end{figure}

\begin{proof}
If Conditions~(1) and~(2) hold, then $B$ avoids the following SST,
as illustrated in Figure~\ref{Fig:Observation1}: $T=(V(G),E(T))$,
where
\[
E(T)=\{[a,b]\} \cup \{[a,x]: x \in V(G) \cap (l^{+} \cup l), x
\neq a\} \cup \{[b,y]: y \in V(G) \cap l^{-}, y \neq b \}.
\]
It is clear that $\mathrm{diam}(T) \leq 3$ (as the distance (in
$T$) of all vertices from the edge $[a,b]$ is at most 1), that $T$
is crossing-free, and that $T$ avoids $B$. The assertion follows.
\end{proof}

\begin{remark}
It is clear that the observation holds also if $a$ and $b$ have
neighbors on $l$, as long as they do not have a common neighbor on
$l$.
\end{remark}

\begin{corollary}\label{Cor1}
Assume $B \in \mathcal{B(T}_{\leq 3})$, and let $a,b$ be two
leaves of $B$. Let the corresponding leaf edges be $[a,c]$ and
$[b,d]$. If $a,b,c,d$ are mutually distinct, then the points
$a,b,d,c$ (in this order) are the vertices of a convex
quadrilateral.
\end{corollary}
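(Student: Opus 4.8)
The plan is to reduce the claim to a single application of Observation~\ref{Obs:Basic}. First I would record the consequences of the leaf hypotheses: since $a$ is a leaf with leaf edge $[a,c]$, its unique neighbor in $B$ is $c$, and likewise the unique neighbor of $b$ is $d$. Because $a,b,c,d$ are mutually distinct, the only neighbor $c$ of $a$ is different from $b$, so $[a,b]\notin E(B)$; thus Condition~(1) of Observation~\ref{Obs:Basic} holds automatically. It then remains to control Condition~(2), which for these leaves asks precisely for a line $l$ with $a$ together with the unique neighbor $d$ of $b$ on one open side, and $b$ together with the unique neighbor $c$ of $a$ on the other. In other words, Condition~(2) is equivalent to the existence of a line strictly separating the pair $\{a,d\}$ from the pair $\{b,c\}$.

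The key step is to argue that the segments $[a,d]$ and $[b,c]$ must cross. I would proceed by contradiction: suppose they are disjoint. Then $\mathrm{conv}\{a,d\}$ and $\mathrm{conv}\{b,c\}$ are two disjoint compact convex sets, so by the hyperplane separation theorem in the plane there is a line $l$ with $\{a,d\}\subset l^{+}$ and $\{b,c\}\subset l^{-}$. This line, together with the observation above that $[a,b]\notin E(B)$, verifies both hypotheses of Observation~\ref{Obs:Basic}, and hence $B\notin\mathcal{B(T}_{\leq 3})$, contradicting the assumption that $B$ is a blocker. Therefore the segments $[a,d]$ and $[b,c]$ cannot be disjoint; since $G$ is in general position and the four points are distinct, no endpoint of one segment can lie on the other (that would force three collinear vertices), so the intersection is a genuine crossing of their relative interiors.

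To finish, I would invoke the standard fact that two segments with four distinct endpoints in general position cross in their interiors if and only if the four endpoints are in convex position and the two segments are the diagonals of the resulting convex quadrilateral. Applying this to the crossing pair $[a,d]$ and $[b,c]$, the four points $a,b,c,d$ are in convex position, and $[a,d]$, $[b,c]$ join opposite vertices; this pins down the cyclic order around the hull as $a,b,d,c$, which is exactly the assertion. I expect the only delicate point to be the passage from ``no separating line'' to a genuine interior crossing: this is where strict (open) separation of the two segments and the general-position hypothesis are both needed, the former to match the open half-planes $l^{+},l^{-}$ of Observation~\ref{Obs:Basic} and the latter to rule out degenerate incidences; once these are in place the geometric conclusion about the convex quadrilateral is immediate.
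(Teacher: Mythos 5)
Your proposal is correct and follows essentially the same route as the paper's own proof: both reduce the statement to Observation~\ref{Obs:Basic} by noting that if $[a,d]$ and $[b,c]$ were disjoint they could be strictly separated by a line, which would contradict $B \in \mathcal{B(T}_{\leq 3})$, and both then use the equivalence between an interior crossing of $[a,d],[b,c]$ and the convex position of $a,b,d,c$ in that cyclic order. Your write-up merely runs this argument in the contrapositive direction and makes explicit two details the paper leaves implicit (that $[a,b]\notin E(B)$ and that general position rules out degenerate incidences), which is a fine presentational difference rather than a different proof.
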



\begin{proof}
Recall that $a,b,c,d$ are vertices of $G$ and therefore, are in
general position. If they are not in convex position, then the
segments $[a,d]$ and $[b,c]$ are disjoint. The same holds if they
form a convex quadrilateral in a different order, say $a,c,b,d$ or
$a,b,c,d$. If $[a,d] \cap [b,c] = \emptyset$, then these two
segments can be separated by a line $l$. This means that the
conditions of Observation~\ref{Obs:Basic} hold for $a,b,$ and $l$,
and thus $B \not \in \mathcal{B(T}_{\leq 3})$, a contradiction.
\end{proof}

\begin{observation}\label{Obs2}
Suppose $B \in \mathcal{B(T}_{\leq 3})$. Let $a$ be a leaf of $B$
with leaf edge $[a,b]$. Let $c \in V(G) \cap l(a,b)^{+}$ be the
vertex for which the angle $\angle abc$ is maximal (among all
vertices in $l(a,b)^{+}$). Then $[b,c] \in B$.
\end{observation}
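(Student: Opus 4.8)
The plan is to prove Observation~\ref{Obs2} by contradiction, invoking Observation~\ref{Obs:Basic} to exhibit a separating line whenever $[b,c] \notin B$. The key geometric fact I want to exploit is that $c$ is the \emph{extremal} vertex, i.e., the one maximizing the angle $\angle abc$ among vertices on the side $l(a,b)^{+}$. This extremality should force all neighbors of $b$ (other than $a$) to lie, together with $a$, in a half-plane that excludes $c$, which is exactly what Observation~\ref{Obs:Basic} needs.

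More concretely, I would first recall that $a$ is a leaf of $B$ with unique neighbor $b$, so the only edge of $B$ incident to $a$ is $[a,b]$. Assume for contradiction that $[b,c] \notin E(B)$; I aim to apply Observation~\ref{Obs:Basic} with the roles of its ``$a,b$'' played by $c$ and $a$ (the pair whose non-edge I want to exploit is $[a,c]$, which is indeed absent since $a$'s only neighbor is $b$). I then need a line $l$ separating the configuration appropriately: on one side should lie $a$ together with all neighbors of $c$ in $B$, and on the other side $c$ together with all neighbors of $a$ in $B$. Since $a$'s sole neighbor is $b$, the ``neighbors of $a$'' side just needs to contain $b$ and $c$, while the other side must contain $a$ and every neighbor of $c$. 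The natural candidate for $l$ is a line through $b$ chosen using the maximality of the angle $\angle abc$: rotating the ray $\vec{ba}$ toward $c$, the vertex $c$ achieves the largest angle, so the line through $b$ slightly rotated past the ray $\vec{bc}$ places $c$ strictly on one side and all other relevant vertices on the other.

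The main obstacle I anticipate is setting up the separating line $l$ precisely and verifying both separation conditions simultaneously, since Observation~\ref{Obs:Basic} requires a clean partition where $a$ and $b$ land on opposite sides \emph{and} each one's neighbors land with the opposite vertex. I expect to choose $l$ passing through $b$ (or through a point near $b$) so that $a$ lies on $l^{+}$ and $c$ lies on $l^{-}$, then argue via the angular extremality of $c$ that every neighbor of $c$ in $B$ lies in $l^{+}$ alongside $a$: any such neighbor $w$ would satisfy $\angle abw \le \angle abc$ by maximality of $c$, which constrains $w$ to the correct side. A delicate point will be handling vertices that might lie exactly on $l$ or the possibility that $b$ itself is a neighbor to be accounted for; here I would lean on the Remark following Observation~\ref{Obs:Basic}, which permits neighbors on $l$ provided $a$ and $c$ share no common neighbor there.

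Once the separation is established, the contradiction is immediate: Observation~\ref{Obs:Basic} yields $B \notin \mathcal{B(T}_{\leq 3})$, contradicting the hypothesis $B \in \mathcal{B(T}_{\leq 3})$. Hence $[b,c] \in B$, as claimed. I would keep the angular argument as the crux and relegate the incidence bookkeeping (that $a$ contributes only the neighbor $b$) to a single sentence, since it follows directly from $a$ being a leaf.
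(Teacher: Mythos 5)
There is a genuine gap, and it sits exactly at the point you flagged as the ``main obstacle'': the separating line that Observation~\ref{Obs:Basic} needs for the pair $(c,a)$ may fail to exist, and the maximality of $\angle abc$ cannot produce it. Applying that observation with $c,a$ in the roles of its ``$a,b$'' requires a line with $c$ and $b$ strictly on one side, and $a$ together with \emph{all} neighbors of $c$ in $B$ strictly on the other. But the extremality of $c$ constrains only vertices lying in $l(a,b)^{+}$; neighbors of $c$ in $B$ lying in $l(a,b)^{-}$ are completely unconstrained, and they can make any such separation impossible. Concretely, put $b=(0,0)$, $a=(1,0)$, $c=(-1,1)$ (so $\angle abc=135^{\circ}$), and suppose the edges $[c,u]$ and $[c,w]$ belong to $B$, where $u=(-10,10.5)$ and $w=(0,-1)$. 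Then $u\in l(a,b)^{+}$ with $\angle abu\approx 133.6^{\circ}<\angle abc$, so nothing here contradicts the choice of $c$, and $w\in l(a,b)^{-}$. A short computation shows that $c$ lies strictly inside the triangle with vertices $a,u,w$; hence no line whatsoever places $c$ strictly on one side and $a,u,w$ in the closed half-plane on the other (so even the relaxation in the remark following Observation~\ref{Obs:Basic} does not help), and the observation cannot be invoked for this pair. (A smaller slip: your candidate line through $b$ ``slightly rotated past $\vec{bc}$'' in fact leaves $a$ and $c$ on the same side; but the hull obstruction above is the essential failure, since it rules out \emph{every} line.)

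The paper's proof does not route through Observation~\ref{Obs:Basic} at all; it exhibits the avoided tree directly: $T=\{[a,x]:x\in V(G)\setminus\{a,b\}\}\cup\{[b,c]\}$, a star at $a$ on all vertices except $b$, with $b$ attached to $c$. This $T$ is spanning of diameter at most $3$, avoids $B$ (the only $B$-edge at $a$ is $[a,b]$, and $[b,c]\notin B$ by assumption), and is simple because the only possible crossing is between some $[a,d]$ and $[b,c]$, which would force $d\in l(a,b)^{+}$ and $\angle abd>\angle abc$, contradicting the choice of $c$. Note that this tree is not of the half-plane form underlying Observation~\ref{Obs:Basic}: its simplicity comes from the angular extremality, not from a separating line, and it works perfectly well in the configuration above (the edge $[a,u]$ does not cross $[b,c]$ precisely because $\angle abu<\angle abc$). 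That direct construction is the missing idea in your plan; the incidence bookkeeping you did carry out (that $a$ contributes only the neighbor $b$, and that $[a,c]\notin B$) is correct but not the crux.
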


\begin{figure}[tb]
\begin{center}
\scalebox{0.8}{
\includegraphics{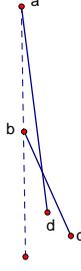}
} \caption{An illustration to the proof of
Observation~\ref{Obs2}.} \label{Fig:Observation2}
\end{center}
\end{figure}

\begin{proof}
If $[b,c] \not \in B$, then $B$ avoids the following SST:
\[
T= \{[a,x]: x \in V(G) \setminus \{a,b\} \} \cup \{[b,c]\}.
\]
It is clear that $T$ is a spanning tree of diameter at most $ 3$.
$T$ avoids $B$ since the only edge in $B$ that emanates from $a$
is $[a,b]$, and since $[b,c] \not \in B$. Finally, $T$ is simple,
since if two edges in $T$ cross, then these must be edges of the
form $[a,d]$ and $[b,c]$ for some $d \in V(G)$, and in such case,
$\angle abd > \angle abc$ (see Figure~\ref{Fig:Observation2}),
contradicting the choice of $c$. Thus, $B \not \in
\mathcal{B(T}_{\leq 3})$, a contradiction.
\end{proof}

Clearly, the same holds for the vertex $c \in V(G) \cap
l(a,b)^{-}$ for which the angle $\angle abc$ is maximal among all
vertices in $l(a,b)^{-}$.

If the leaf edge $[a,b]$ lies on the boundary of
$\mathrm{conv}(G)$, then the line $l(a,b)$ supports $V(G)$, and
thus only one of the sides of $l(a,b)$ (w.l.o.g. $l(a,b)^{+}$)
contains vertices of $G$. The vertex $c \in V(G) \cap l(a,b)^{+}$
for which the angle $\angle abc$ is maximal is the vertex that
follows $b$ on the boundary of $\mathrm{conv}(G)$, and thus
$[b,c]$ is a boundary edge.

If $[a,b]$ is not a boundary edge, then there exist two vertices
$c \in V(G) \cap l(a,b)^{+}$ and $c' \in V(G) \cap l(a,b)^{-}$
such that the angles $\angle abc, \angle abc'$ are maximal (each
with respect to its side of $l(a,b)$), and $[b,c],[b,c'] \in B$.
This observation is used in the proof of the theorem below.

\medskip

Now we are ready to present the main result of this section.
\begin{theorem}\label{Thm:Caterpillar}
Let $G$ be a complete geometric graph. Then any blocker for SSTs
of diameter at most $ 3$ in $G$ is either a star or a caterpillar
with a spine whose terminal edges lie on the boundary of
$\mathrm{conv}(G)$.\footnote{Note that usually the term ``terminal
edges of the spine'' of a caterpillar is not defined uniquely.
Here and in the sequel we mean that there exists a spine whose
terminal edges are boundary edges, and in all proofs where we
consider {\it the} spine of $B$, we refer to a particular spine
whose terminal edges are boundary edges.}
\end{theorem}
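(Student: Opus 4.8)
The plan is to start from what we already know: by Proposition~\ref{Prop:Tree} the blocker $B$ is a spanning tree, so it suffices to show that if $B$ is not a star then it is a caterpillar and that a spine can be chosen with its two terminal edges on the boundary. I would treat the caterpillar property and the location of the spine separately, the former being the substantial part.

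For the caterpillar property I would argue by contradiction. A tree fails to be a caterpillar exactly when its derived tree is not a path, i.e. when $B$ contains a vertex $w$ incident to three internally disjoint paths of length at least $2$; following each of these paths to its far end produces three genuine leaves $a_1,a_2,a_3$ of $B$, with distinct leaf edges $[a_1,c_1],[a_2,c_2],[a_3,c_3]$ and with all six endpoints distinct (the $c_i$ lie in pairwise different branches at $w$). Applying Corollary~\ref{Cor1} to each of the three pairs of leaves, every quadruple $\{a_i,a_j,c_i,c_j\}$ is in convex position with the two leaves consecutive and the two attaching vertices consecutive; that is, the leaf edges $[a_i,c_i]$ and $[a_j,c_j]$ are opposite sides of a convex quadrilateral whose diagonals are $[a_i,c_j]$ and $[a_j,c_i]$. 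I would then combine these three pairwise constraints into a single global statement about the cyclic arrangement of the six points and the three leaf edges, and use it to exhibit a non-edge of $B$ whose two endpoints, together with their $B$-neighborhoods, can be separated by a line, which by Observation~\ref{Obs:Basic} contradicts $B\in\mathcal{B}(\mathcal{T}_{\leq 3})$.

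The hard part will be producing that separating line. The naive attempt — separating a pair of leaves $a_i,a_j$ using their own leaf edges — is doomed, precisely because Corollary~\ref{Cor1} forces the relevant segments $[a_i,c_j]$ and $[a_j,c_i]$ to cross, so Observation~\ref{Obs:Basic} never applies to two leaves alone. The genuine obstacle is that the attaching vertices $c_i$ may have large degree in $B$ (the three branches at $w$ may be long), so one cannot reduce to a clean seven-vertex spider picture and must control the second neighborhoods. Here I would use Observation~\ref{Obs2}: for each leaf $a_i$ it pins down the extreme ($\angle$-maximal) neighbors of $c_i$ on the two sides of the line $l(a_i,c_i)$, and distinguishes the case in which $[a_i,c_i]$ is a boundary edge (only one side is populated) from the case in which it is interior (both sides are populated and $\deg_B(c_i)\geq 3$). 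Feeding this local angular information into the global cyclic picture should, for an appropriately chosen non-edge among the $a_i$, $c_i$ and $w$, yield two $B$-neighborhoods lying on opposite open sides of a line with no common vertex on it, which is exactly the hypothesis of Observation~\ref{Obs:Basic} (in the form of the Remark following it).

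Finally, once $B$ is known to be a caterpillar, I would locate the spine on the boundary using the boundary consequence of Observation~\ref{Obs2}: whenever a leaf edge $[a,b]$ of $B$ lies on the boundary of $\mathrm{conv}(G)$, the next boundary edge $[b,c]$ is forced into $B$, so $B$ contains a maximal path running along the boundary of $\mathrm{conv}(G)$. I would show that the endpoints of a longest path of the caterpillar can be taken to be such boundary leaves — rerouting the spine through these forced boundary edges when a terminal edge of a chosen longest path happens to be interior — so that a spine exists whose two terminal edges are boundary edges, completing the proof. This last step I expect to be routine compared with the caterpillar argument.
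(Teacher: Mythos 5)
Your skeleton (the blocker is a spanning tree by Proposition~\ref{Prop:Tree}; if it is not a star, derive a contradiction from a vertex of $B$ carrying three internally disjoint branches of length at least $2$; then place the spine) matches the paper's, and your inventory of tools --- Corollary~\ref{Cor1}, Observation~\ref{Obs2}, Observation~\ref{Obs:Basic} --- is the right one. But the two steps that actually carry the proof are left as statements of intent, not arguments. For the caterpillar step you say you would ``combine the three pairwise constraints into a single global statement'' and that feeding the angular information of Observation~\ref{Obs2} into it ``should\dots yield'' a line separating the $B$-neighborhoods of some non-adjacent pair; no such pair and no such line are ever exhibited, and you yourself prove that the natural candidates (two leaves) can never work. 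Likewise the final step --- ``rerouting the spine through forced boundary edges'' when a terminal edge of a longest path is interior --- presupposes that boundary leaf edges exist at the two ends of the caterpillar, which is exactly what needs proof; propagating along the boundary from some boundary leaf edge (if one exists) does not give it.

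The paper closes precisely this gap with a two-step argument that does not require any new separating line, and it is worth recording why it is the missing idea. Step (i): for every leaf $a$ of the derived tree $B'$, the \emph{extremal} leaf edge $[a,f]$ at $a$ is a boundary edge of $\mathrm{conv}(G)$. Indeed, fixing another leaf $b$ of $B'$, all leaf edges at $a$ lie in one open half-plane $l(a,b)^{+}$ (Corollary~\ref{Cor1}); if $[a,f]$ were interior, Observation~\ref{Obs2} applied on \emph{both} sides of $l(a,f)$ would force two edges $[a,c],[a,e]\in B$ with $c\notin l(a,b)^{+}$, so $[a,c]$ is not a leaf edge; since a leaf of $B'$ has exactly one non-leaf edge, $[a,e]$ is then a leaf edge, hence $e\in l(a,b)^{+}$, contradicting the extremality of $[a,f]$. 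Step (ii): if $B'$ had three leaves, $B$ would contain three pairwise disjoint leaf edges lying on the boundary of $\mathrm{conv}(G)$; orienting the boundary, two of the three directed edges point the same way, and for that pair the two attachment vertices lie on \emph{opposite} sides of the line through the two leaves, violating Corollary~\ref{Cor1}. This parity argument is purely combinatorial --- Observation~\ref{Obs:Basic} is never invoked again, the only separating lines being those already packaged inside Corollary~\ref{Cor1} and Observation~\ref{Obs2}. Moreover, step (i) makes your last step unnecessary: since the terminal edges of any spine emanate from the two leaves of $B'$, they can simply be chosen to be these extremal boundary leaf edges. Without step (i) and the orientation argument of step (ii), your proposal does not close.
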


\begin{figure}[tb]
\begin{center}
\scalebox{0.8}{
\includegraphics{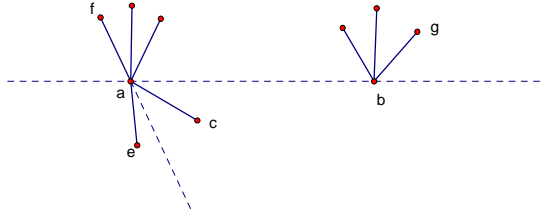}
} \caption{An illustration to the proof of
Theorem~\ref{Thm:Caterpillar}.} \label{Fig:Caterpillar}
\end{center}
\end{figure}

\begin{proof}
Suppose $B \in \mathcal{B(T}_{\leq 3})$. By
Proposition~\ref{Prop:Tree}, $B$ is a tree. If $B$ is a star, we
are done. Otherwise, the derived graph $B'$ is a tree with more
that one vertex, and thus it has at least two leaves. Let $a,b$ be
distinct leaves of $B'$. By Corollary~\ref{Cor1} (with $a,b$
playing the role of $c,d$), all the leaf edges of $B$ that emanate
from $a$ and $b$ lie on the same side of the line $l(a,b)$,
w.l.o.g., $l(a,b)^{+}$ (see Figure~\ref{Fig:Caterpillar}). We
claim that the extremal leaf edges emanating from $a$ and $b$,
denoted in the figure by $[a,f]$ and $[b,g]$, are boundary edges.

Assume on the contrary that $[a,f]$ is not a boundary edge. As
described above, it follows from Observation~\ref{Obs2} that if $c
\in l(a,f)^{+}$ with $\angle fac$ maximal, and $e \in l(a,f)^{-}$
with $\angle fae$ maximal, then $[a,c],[a,e] \in B$. (Here
$l(a,f)^{+}$ is the side of $l(a,f)$ that contains $b$.) We have
$c \not \in l(a,b)^{+}$, as otherwise $\angle fac < \angle fab$,
contradicting the choice of $c$. Thus, $[a,c]$ is not a leaf edge
(since all the leaf edges that emanate from $a$ lie in
$l(a,b)^{+}$). On the other hand, since $a$ is a leaf of $B'$, all
the edges in $B$ that emanate from $a$ except one are leaf edges,
and thus, $[a,e]$ is a leaf edge. Therefore, $e \in l(a,b)^{+}$
(and not as shown in the figure), which contradicts the assumption
that $[a,f]$ is the extremal (i.e., the leftmost) leaf edge
emanating from $a$.

So far we have shown that any leaf of $B'$ is contained in a leaf
edge of $B$ that is a boundary edge. In order to complete the
proof, it suffices to show that $B'$ has only two leaves, which
will imply that $B'$ is a path, and hence $B$ is a caterpillar. As
the terminal edges of the spine of a caterpillar $B$ emanate from
the two leaves of $B'$, it will follow that these edges can be
chosen to be boundary edges of $G$, completing the proof of the
theorem.

Assume on the contrary that $B'$ has at least three leaves, say
$a,b,$ and $c$. By the previous steps of the proof, $B$ has leaf
edges $[a,d],[b,e],$ and $[c,f]$, which all lie on the boundary of
$\mathrm{conv}(G)$. By Corollary~\ref{Cor1}, these edges must
satisfy the following three conditions:
\begin{itemize}
\item $d$ and $e$ lie on the same side of $l(a,b)$.

\item $d$ and $f$ lie on the same side of $l(a,c)$.

\item $e$ and $f$ lie on the same side of $l(b,c)$.
\end{itemize}

\begin{figure}[tb]
\begin{center}
\scalebox{0.8}{
\includegraphics{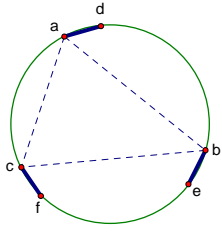}
} \caption{Three leaf edges on the boundary of
$\mathrm{conv}(G)$.} \label{Fig:3-Leaves}
\end{center}
\end{figure}

But if we somehow orient the boundary of $\mathrm{conv}(G)$ (say,
counterclockwise), then at least two of the directed edges
$\overrightarrow{ad}, \overrightarrow{be}$ and
$\overrightarrow{cf}$ point the same way (both forward or both
backward), and thus fail the condition above. This completes the
proof of the theorem.
\end{proof}

\section{The Convex Case -- Proof of Theorem~\ref{Thm:Convex}}
\label{sec:convex}

\begin{figure}[tb]
\begin{center}
\scalebox{0.8}{
\includegraphics{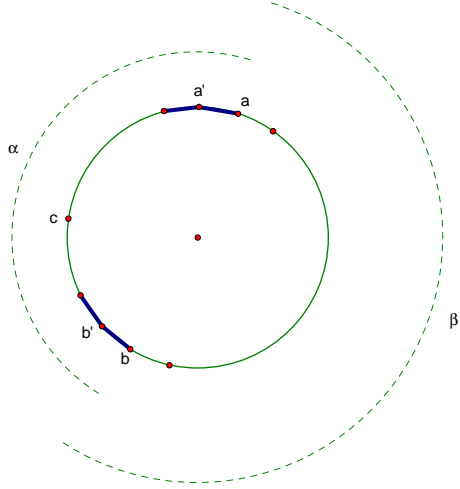}
} \caption{The terminal edges of the spine of a blocker for SSTs
of diameter at most $ 3$ in a complete convex geometric graph
divide the boundary into two arcs.} \label{Fig:Conv1}
\end{center}
\end{figure}

In this section we assume in addition that $G$ is convex, i.e.,
that the vertices of $G$ are in convex position in the plane. By
Theorem~\ref{Thm:Caterpillar}, a blocker for SSTs of diameter at
most $ 3$ in $G$ is a caterpillar, and the terminal edges of its
spine lie on the boundary of $\mathrm{conv}(G)$. We wish to show
that the whole spine of $B$ lies on the boundary of
$\mathrm{conv}(G)$. This is clear when $B$ is a star. Assume,
therefore, that $B$ is not a star. Denote the terminal edges by
$[a,a']$ and $[b',b]$, where $a$ and $b$ are the leaves, and $a'
\neq b'$, as shown in Figure~\ref{Fig:Conv1}. (Note that by
Corollary~\ref{Cor1}, $a$ and $b$ lie on the same side of the line
$l(a',b')$.) Let $\alpha$ and $\beta$ denote the two closed arcs
with endpoints $a,b$ on the boundary of $\mathrm{conv}(G)$, as
shown in the figure.
\begin{claim}\label{Claim:Conv1}
If $c$ is a leaf of $B$, then $c \in \beta$.
\end{claim}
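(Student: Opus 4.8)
The setup establishes that $B$ is a non-star caterpillar blocker for SSTs of diameter $\leq 3$ in a convex $G$, with spine terminal edges $[a,a']$ and $[b',b]$ on the boundary, where $a,b$ are the leaves of $B$ that are endpoints of the spine. The two arcs $\alpha,\beta$ partition the boundary between $a$ and $b$, and I want to show every leaf $c$ of $B$ lies on the arc $\beta$ (the one not containing $a',b'$, presumably, as fixed by the figure's orientation).

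The plan is to argue by contradiction via Observation~\ref{Obs:Basic}. Suppose $c$ is a leaf of $B$ lying on the arc $\alpha$ rather than $\beta$. Let $[c,c']$ be its leaf edge. Since $a$ and $c$ are both leaves of $B$ with distinct leaf edges, Corollary~\ref{Cor1} forces $a,a',c',c$ (in the appropriate cyclic order) into convex position, constraining which side of $l(a,c)$ the partners $a'$ and $c'$ may lie on. I would combine the constraints coming from the two pairs of leaves $(a,c)$ and $(b,c)$ simultaneously: each application of Corollary~\ref{Cor1} tells me that the partner vertices $a'$ and $c'$ (respectively $b'$ and $c'$) sit on a common side of the relevant line. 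The core of the argument is to show these constraints are incompatible with $c$ lying on the wrong arc $\alpha$.

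Concretely, I would exploit convexity to find the separating line directly. Because all vertices are in convex position, once $c$ is placed on $\alpha$, the leaf edges $[a,a']$ and $[c,c']$ can be chosen so that the segments $[a,c']$ and $[c,a']$ (or the analogous crossing diagonals) are disjoint, yielding a separating line $l$. Then with $a$ and the neighbors of $c$ on one side and $c$ and the neighbors of $a$ on the other, Observation~\ref{Obs:Basic} applies to the pair $(a,c)$ and gives $B \notin \mathcal{B}(\mathcal{T}_{\leq 3})$, a contradiction. The key computation is verifying the disjointness/separation using the cyclic order of boundary vertices, which is where convexity does all the work: on a convex polygon, membership of $c$ in $\alpha$ versus $\beta$ is exactly the combinatorial condition that decides whether the relevant diagonals cross.

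The main obstacle I anticipate is bookkeeping the side conditions precisely. I must pin down the cyclic positions of $a,a',b',b$ and the candidate $c,c'$ around $\mathrm{conv}(G)$ and confirm that placing $c \in \alpha$ really does produce a configuration satisfying both hypotheses of Observation~\ref{Obs:Basic} for the pair $(a,c)$ (namely that $c'$, the unique neighbor of $c$, lands on $a$'s side, and that $a$'s neighbors, in particular $a'$, land on $c$'s side). The subtle point is that $c$ might have its leaf edge pointing in a direction that, a priori, could place $c'$ on either arc; I expect to rule out the bad placement by invoking Corollary~\ref{Cor1} once more against the leaf pair $(b,c)$ to fix $c'$ relative to $b'$, and then checking that the two fixings together are consistent only when $c \in \beta$. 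I do not expect any hard analysis beyond careful case-tracking of these cyclic orders.
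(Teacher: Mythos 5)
Your proposal is correct and follows essentially the same route as the paper: the paper's proof applies Corollary~\ref{Cor1} to the two leaf pairs $(a,c)$ and $(b,c)$ and observes that the resulting side conditions (with $c'$ forced toward $a'$ and toward $b'$ simultaneously) are incompatible when $c \in \alpha$. Your additional unfolding of the mechanism via separating lines and Observation~\ref{Obs:Basic} is just the proof of Corollary~\ref{Cor1} made explicit, and you correctly flag that when one pair fails to yield a separating line the other pair must be used, which is precisely the paper's two-constraint contradiction.
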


\begin{proof}
Assume, w.l.o.g., that $c \neq a$ and $c \neq b$. Denote the leaf
edge of $B$ that emanates from $c$ by $[c,c']$. By
Corollary~\ref{Cor1}, the following two conditions hold:
\begin{itemize}
\item Either $a'=c'$ or $a$ and $c$ lie on the same side of $l(a',c')$.

\item Either $b'=c'$ or $b$ and $c$ lie on the same side of $l(b',c')$.
\end{itemize}

If $c \in \alpha$ (as in the figure), then the two conditions
clearly contradict each other. Hence, $c \in \beta$, as claimed.
\end{proof}

Now we are ready to prove the main result of this section.
\begin{theorem}\label{Thm:Main-convex1}
Any blocker for SSTs of diameter at most $ 3$ in a complete convex
geometric graph $G$ is a comb of $G$.
\end{theorem}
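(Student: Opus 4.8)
The plan is to show that a blocker $B$ for SSTs of diameter at most $3$ in a convex $G$ satisfies all three conditions in the definition of a comb. By Theorem~\ref{Thm:Caterpillar} we already know $B$ is a caterpillar (when not a star; but a star in a convex graph is itself a degenerate comb, or we reduce to the non-star case) whose spine has terminal edges $[a,a']$ and $[b',b]$ on the boundary of $\mathrm{conv}(G)$, with the two leaves $a,b$ lying on the same side of $l(a',b')$. The work is to upgrade this to the full comb structure.

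First I would establish Condition~(1), that $B \cap \partial\,\mathrm{conv}(G)$ is a simple path $P$ (the spine). By Claim~\ref{Claim:Conv1}, every leaf of $B$ lies in the arc $\beta$, so in particular the leaf-edge endpoints $a,b$ and all other leaves are confined to $\beta$; combined with the fact that the terminal edges are boundary edges, I would argue that the non-leaf vertices of the spine are forced onto the boundary between $a$ and $b$, so that the spine traces out a boundary path. The key point is to rule out any spine vertex leaving the boundary: if an interior spine edge failed to lie on $\partial\,\mathrm{conv}(G)$, I would separate the two leaves $a,b$ from some configuration by a line and invoke Observation~\ref{Obs:Basic} (or Corollary~\ref{Cor1}) to contradict the blocker property.

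Next I would verify Condition~(2): each vertex not on the spine is joined to an \emph{interior} vertex of $P$ by a single edge. The single-edge (and interior-attachment) part follows from the caterpillar structure, since off-spine vertices are leaves of $B$ and each leaf has exactly one incident edge; that this edge lands on an interior spine vertex, and not a terminal one, should follow because $a$ and $b$ are themselves leaves of $B'$, so no further leaf edge can emanate from them. For Condition~(3) -- that the line $l(e)$ spanned by each edge $e$ of $B$ crosses no edge of $B$ -- I would argue edge by edge: if some $l(e)$ crossed an edge $e'$ of $B$, I would locate two leaves $a,b$ (or more generally two degree-restricted vertices) separated by $l(e)$ in the sense of Observation~\ref{Obs:Basic}, producing a diameter-$3$ SST that $B$ fails to meet. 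In the convex setting this is especially clean, since $l(e)$ divides the convex boundary into two arcs and one can read off the required separation directly.

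The main obstacle will be Condition~(3): unlike the purely combinatorial caterpillar/path facts, it is a genuinely geometric constraint, and the delicate case is a \emph{non-boundary} (comb-tooth) edge whose supporting line might cut across the spine. I expect to handle it by choosing the separating line $l$ appropriately relative to $l(e)$ and applying the remark following Observation~\ref{Obs:Basic} (allowing neighbors on $l$ as long as there is no common neighbor), so that a crossing of $l(e)$ with another edge of $B$ is converted into a violating diameter-$3$ tree. Once all three conditions are in hand, $B$ is by definition a comb of $G$, completing the proof.
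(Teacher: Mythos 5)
Your overall skeleton matches the paper's proof: invoke Theorem~\ref{Thm:Caterpillar} to get a caterpillar with boundary terminal edges, use Claim~\ref{Claim:Conv1} to confine the leaves to the arc $\beta$, force the spine onto the boundary, and finish with Corollary~\ref{Cor1}. But the central step --- forcing the spine onto the boundary --- is only gestured at (``I would separate the two leaves $a,b$ from some configuration by a line''), and this is precisely where all the work of the theorem lies. Claim~\ref{Claim:Conv1} only tells you that every vertex of the arc $\alpha$ is a spine vertex; it does not prevent the spine from visiting those vertices out of order, nor from detouring through interior vertices of $\beta$ (the claim says every leaf lies in $\beta$, not that every vertex of $\beta$ is a leaf). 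The paper rules both out by a specific construction: writing the spine as $\langle d_0,\ldots,d_{l+1}\rangle$ with $d_0=a$, $d_{l+1}=b$, it takes the \emph{first} index $\nu$ at which $d_{\nu+1}$ fails to be the successor of $d_\nu=a_i$ on $\alpha$, and applies Observation~\ref{Obs:Basic} to the pair $(a_i,b)$ --- a spine vertex and a spine endpoint, not two leaves --- with a line crossing the segments $[a_{i-1},a_i]$ and $[a_k,b]$. The minimality of $\nu$ is what makes the hypotheses of the Observation verifiable: the predecessor $d_{\nu-1}$ lies among $a_0,\ldots,a_{i-1}$, the successor $d_{\nu+1}$ lies either there or in $\beta$, and any leaves attached to $a_i$ lie in $\beta$, so all neighbors of $a_i$ fall on $b$'s side of the line, while $b$'s unique neighbor $a_k$ falls on $a_i$'s side. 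Without identifying this pair, this line, and this minimal-index choice, the proof is not actually carried out; note also that Corollary~\ref{Cor1}, your suggested alternative tool here, concerns pairs of leaf edges and cannot say anything about spine edges.

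A second, smaller point: you flag Condition~(3) of the comb definition as ``the main obstacle,'' but in the convex case it is automatic once simplicity is known. The line spanned by a chord of a convex polygon meets $\mathrm{conv}(G)$ in exactly that chord (if the intersection segment extended past an endpoint of the chord, that endpoint would fail to be an extreme point of $\mathrm{conv}(G)$), so $l(e)$ can cross an edge of $B$ only if $e$ itself does; and for spine edges, $l(e)$ is a supporting line of $\mathrm{conv}(G)$. Hence Condition~(3) reduces to non-crossing, which follows from Corollary~\ref{Cor1} exactly as you (and the paper) argue for simplicity. So the delicate new argument you plan for Condition~(3) is unnecessary; the genuine difficulty you needed to resolve is the spine-ordering step above.
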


\begin{figure}[tb]
\begin{center}
\scalebox{0.8}{
\includegraphics{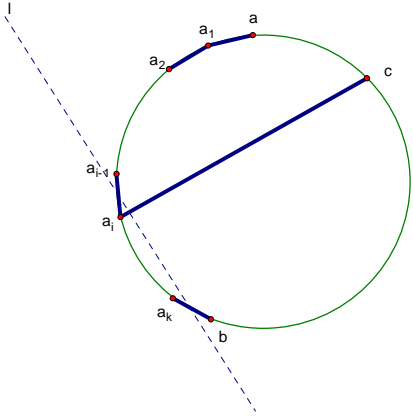}
} \caption{An illustration of the proof of
Theorem~\ref{Thm:Main-convex1}.} \label{Fig:Conv2}
\end{center}
\end{figure}

\begin{proof}
Following the notations of Figure~\ref{Fig:Conv1}, denote the
vertices of $G$ on the arc $\alpha$ by $a=a_0,
a'=a_1,a_2,a_3\ldots, a_k=b',a_{k+1}=b$, as shown in
Figure~\ref{Fig:Conv2}. Since any vertex of a caterpillar that
does not belong to its spine is a leaf, it follows from
Claim~\ref{Claim:Conv1} that all the vertices
$a_0,a_1,\ldots,a_{k+1}$ belong to the spine of $B$. Let the spine
of $B$ be $\langle d_0, d_1, \ldots, d_l,d_{l+1} \rangle$, where
$d_0=a_0=a$, $d_1=a_1=a'$, $d_l=a_k=b'$, and $d_{l+1}=a_{k+1}=b$.
If the spine of $B$ is not $\langle a_0, a_1, \ldots, a_k,a_{k+1}
\rangle$, then there is a first index $\nu$, $1 \leq \nu \leq
l-1$, such that either $d_{\nu +1} \not \in \alpha$, or $d_{\nu
+1} \in \alpha$, but $d_{\nu+1}$ precedes $d_{\nu}$ on $\alpha$.
Assume $d_{\nu}=a_i$, $0<i<k$. Let $l$ be a line that crosses the
segments $[a_{i-1},a_i]$ and $[a_k,b]$ (see
Figure~\ref{Fig:Conv2}). On one side of $l$ (call it $l^{-}$), we
have $a_i$ and the only neighbor of $b$ in $B$, i.e., $a_k$. On
the other side $l^{+}$ of $l$ we have $b$, all the vertices
$a_0,a_1,\ldots,a_{i-1}$, and the whole path $\beta$. The only
neighbors of $a_i (=d_{\nu})$ in $B$ are its predecessor $d_{\nu
-1}$ (which lies in $\{a_0,a_1,\ldots,a_{i-1}\}$), its successor
$d_{\nu +1}$ (which lies either in $\{a_0,a_1,\ldots,a_{i-1}\}$ or
in $\beta$), and possibly some leaves (which all lie in $\beta$).
Thus, all neighbors of $a_i$ in $B$ lie in $l^{+}$. Hence, the
conditions of Observation~\ref{Obs:Basic} hold for $d_{\nu}=a_i,b$
and the line $l$, and thus, by the observation, $B$ avoids an SST
of diameter at most $ 3$, a contradiction.

Therefore, the spine of $B$ is the boundary path $\langle a_0,
a_1, \ldots, a_k, b \rangle$. Finally, $B$ is simple, since the
only edges in $B$ that can cross are the leaf edges, and these
edges do not cross, again by Corollary~\ref{Cor1}. This completes
the proof.
\end{proof}

\section{The General Case -- Proof of Theorem~\ref{Thm:General}}
\label{sec:general}

In this section we consider again general geometric graphs. It
turns out that in this case, blocking SSTs of diameter at most $
3$ is not sufficient even for blocking SSTs of diameter $4$, as we
demonstrate by an example at the end of this section. Thus, we
strengthen the assumption, and assume now that $B$ is a blocker
for all SSTs of diameter at most $ 4$. This allows us to use the
following observation.
\begin{lemma}\label{Obs3}
Let $B$ be a blocker for SSTs of diameter at most $ 4$ in $G$. Let
$b$ be a boundary vertex of $G$, and let $[a,b]$ and $[b,c]$ be
the two boundary edges of $G$ that contain $b$. If at least one of
these edges is not in $B$, then $b$ is a leaf of $B$.
\end{lemma}

\begin{proof}
Clearly, the degree of $b$ in $B$ is at least 1, as otherwise, $B$
avoids the star centered at $b$. Assume, on the contrary, that the
degree of $b$ in $B$ is at least $2$, and that (w.l.o.g.) $[a,b]
\not \in B$. Let $G_1$ be the graph obtained from $G$ by omitting
the vertex $b$ and all edges that contain it, and let $B_1 = G_1
\cap B$. By the assumption, $B_1$ is a graph on $n-1$ vertices
(where $n=|V(G)|$) that has at most $n-3$ edges. Therefore, by
Theorem~\ref{Thm:Micha}, $B_1$ avoids an SST $T_1$ of diameter at
most $ 3$ in $G_1$. Since $[a,b]$ is a boundary edge of $G$, it
does not cross any edge of $T_1$, and thus $T=T_1 \cup \{[a,b]\}$
is an SST of $G$ of diameter at most $ 4$ that avoids $B$, a
contradiction.
\end{proof}

Now we are ready to prove the main result of this section.
\begin{theorem}
Let $G$ be a complete geometric graph, and let $B$ be a blocker
for SSTs of diameter at most $ 4$ in $G$. Then $B$ is either a
star or a comb of $G$.
\end{theorem}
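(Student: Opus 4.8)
The plan is to leverage the two refinements already in hand: Theorem~\ref{Thm:Caterpillar}, which tells us that a blocker $B$ for SSTs of diameter at most $3$ (and \emph{a fortiori} for diameter at most $4$) is either a star or a caterpillar whose spine has terminal edges on the boundary of $\mathrm{conv}(G)$; and Lemma~\ref{Obs3}, which is the new tool unlocked by strengthening the hypothesis from diameter $3$ to diameter $4$. If $B$ is a star we are immediately done, so I would assume throughout that $B$ is a non-star caterpillar. The goal is then to verify the three defining conditions of a comb: that $B \cap \partial\,\mathrm{conv}(G)$ is a simple path (the spine $P$), that every vertex off the spine is joined by a single edge to an \emph{interior} vertex of $P$, and the line condition (3).

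First I would establish that the whole spine lies on the boundary of $\mathrm{conv}(G)$. From Theorem~\ref{Thm:Caterpillar} we already know the two terminal edges of the spine are boundary edges, so the two endpoints of the spine and their neighbors are boundary vertices. The leverage of Lemma~\ref{Obs3} is this: if $v$ is any boundary vertex that is \emph{not} a leaf of $B$ (i.e.\ $v$ has degree $\geq 2$), then \emph{both} boundary edges incident to $v$ must belong to $B$. Every internal spine vertex has degree at least $2$, so walking along the boundary of $\mathrm{conv}(G)$ starting from a terminal spine vertex, I would argue inductively that each successive boundary vertex encountered is forced either to be a spine vertex with both its boundary edges in $B$, or to be a leaf. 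Since the spine vertices are exactly the non-leaf vertices of the caterpillar, this should pin the spine down to a connected arc of the boundary, realizing condition~(1) with $P = B \cap \partial\,\mathrm{conv}(G)$.

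Once the spine is a boundary path, condition~(2) is largely structural: in a caterpillar every non-spine vertex is a leaf attached by a single edge, so I only need to check that each such leaf attaches to an \emph{interior} vertex of $P$ rather than to one of the two spine endpoints. But the spine endpoints are leaves of $B$ themselves (being terminal vertices of degree~$1$ in the caterpillar), so no further edge emanates from them; hence every leaf edge lands on an interior spine vertex, giving~(2). For condition~(3), the line condition on each edge, I would invoke Corollary~\ref{Cor1} and Observation~\ref{Obs:Basic}: for a spine (boundary) edge the spanning line supports $\mathrm{conv}(G)$ and so crosses nothing, while for a leaf edge $[c,c']$ a crossing of $l(c,c')$ with another edge of $B$ would produce a configuration of two leaf edges separable by a line, contradicting $B \in \mathcal{B}(\mathcal{T}_{\leq 3})$ via the separating-line argument.

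The main obstacle I anticipate is the first step --- showing the spine lies entirely on the boundary --- because this is exactly where the general (non-convex) case diverges from the convex one. In the convex proof (Theorem~\ref{Thm:Main-convex1}) \emph{all} vertices sit on the boundary, so Claim~\ref{Claim:Conv1} plus a clean separating-line argument suffices; here, interior vertices of $G$ exist and one must rule out the spine dipping into the interior. The delicate point is ensuring that Lemma~\ref{Obs3} can be applied repeatedly and consistently along the boundary arc without a gap where a boundary vertex is neither forced to be a leaf nor forced onto the spine. I expect the argument will require combining Lemma~\ref{Obs3} with the caterpillar structure (spine vertices $=$ high-degree vertices) and possibly a separating-line argument in the spirit of Observation~\ref{Obs:Basic} to exclude the possibility that an internal spine edge is a chord of $\mathrm{conv}(G)$ rather than a boundary edge; handling that case carefully is where the real work lies.
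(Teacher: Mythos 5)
Your overall strategy coincides with the paper's: reduce to a non-star caterpillar via Theorem~\ref{Thm:Caterpillar}, use Lemma~\ref{Obs3} to force the spine onto the boundary of $\mathrm{conv}(G)$, and then invoke Corollary~\ref{Cor1} for simplicity and for the line condition. However, the central step --- showing the spine cannot leave the boundary --- is not actually carried out, and you concede as much (``handling that case carefully is where the real work lies''). Concretely, your inductive walk along the boundary breaks exactly where you fear it does. Lemma~\ref{Obs3} constrains only boundary vertices of degree at least $2$: it says nothing once the walk reaches a vertex that is ``forced to be a leaf.'' So nothing in your argument excludes the following configuration: writing the spine as $\langle b_0,b_1,\ldots,b_{k+1}\rangle$ with $[b_0,b_1]$ and $[b_k,b_{k+1}]$ boundary edges, the \emph{second} boundary edge at $b_1$ goes to a leaf of $B$, while the spine edge $[b_1,b_2]$ is a chord of $\mathrm{conv}(G)$. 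Your walk then halts at that leaf, the induction gives nothing further, and the spine has escaped into the interior without contradiction.

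The missing idea --- which is precisely how the paper closes this case --- is that $B$ cannot contain three boundary leaf edges. This follows from Corollary~\ref{Cor1} by the orientation argument at the end of the proof of Theorem~\ref{Thm:Caterpillar}: of any three center-to-leaf directed boundary edges, two point the same way around the boundary and then violate the convex-quadrilateral condition. Given this fact, one looks at the \emph{first} spine edge $[b_i,b_{i+1}]$ that is not a boundary edge; the second boundary edge $[b_i,c]$ at $b_i$ cannot lie in $B$, since $c$ would then be a leaf and $[b_i,c]$ a third boundary leaf edge alongside the two terminal spine edges; hence Lemma~\ref{Obs3} applies at $b_i$ and makes it a leaf of $B$, contradicting that $b_i$ is an internal spine vertex of degree at least $2$. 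The same fact is what repairs your walk: it shows the walk can never hit a leaf before reaching $b_{k+1}$. A smaller gap of the same kind occurs in your verification of comb condition~(3): when the line spanned by a leaf edge $[b_i,d]$ crosses a \emph{spine} edge, you assert this ``would produce a configuration of two leaf edges separable by a line,'' but the offending second edge is a spine edge, not a leaf edge; the paper's fix is to pair $[b_i,d]$ with the far terminal edge $[b_k,b_{k+1}]$, which then lie on opposite sides of the line $l(b_i,b_k)$, contradicting Corollary~\ref{Cor1}. So the proposal identifies the right tools but leaves the decisive deduction unproved.
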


\begin{proof}
Assume $B$ is not a star. By Theorem~\ref{Thm:Caterpillar}, $B$ is
a caterpillar, and it has a spine $\langle b_0,b_1, \ldots,
b_{k+1} \rangle$ $(k \geq 2)$ whose extreme edges $[b_0,b_1]$ and
$[b_{k},b_{k+1}]$ lie on the boundary of $\mathrm{conv}(G)$. We
would like to show that all the edges $[b_{i},b_{i+1}]$ are
boundary edges of $G$. Assume on the contrary that this is not
true, and let $i$, $0<i<k$, be the smallest index such that
$[b_{i},b_{i+1}]$ is not a boundary edge. By the assumption,
$[b_{i-1},b_i]$ is a boundary edge of $G$. Denote the other
boundary edge that contains $b_i$ by $[b_i,c]$. We claim that
$[b_i,c] \not \in B$. Indeed, as the spine edge $[b_i,b_{i+1}]$ is
not a boundary edge, if we had $[b_i,c] \in B$ then this edge
would be a leaf edge of $B$. But this is impossible, since by the
proof of Theorem~\ref{Thm:Caterpillar}, $B$ cannot contain three
{\it boundary} leaf edges. Therefore, $b_i$ satisfies the
condition of Lemma~\ref{Obs3}, and thus, by that lemma, $b_i$ is a
leaf of $B$, a contradiction.

So far we proved that the spine of $B$ lies on the boundary of
$\mathrm{conv}(G)$. Consequently, if two edges of $B$ cross, then
both must be leaf edges of $B$. However, a leaf edge of $G$ cannot
cross another leaf edge, by Corollary~\ref{Cor1}. Thus, $B$ is
simple. Finally, if an edge of $G$ crosses the line spanned by
another edge, then there are two possibilities:
\begin{enumerate}
\item Both edges are leaf edges. In this case, the convex hull of
the union of these two leaf edges is a triangle and not a
quadrilateral, contrary to Corollary~\ref{Cor1}.

\item One of the edges is a leaf edge $[b_i,d]$, and the line
$l(b_i,d)$ crosses the boundary edge $[b_j,b_{j+1}]$, for some
$i,j$. Assume, w.l.o.g., that $k>j>i$ (and thus, in particular,
$i<k-1$). Consider the edges $[b_i,d]$ and $[b_{k},b_{k+1}]$. Both
are leaf edges of $B$, and they lie on different sides of the line
$l(b_i,b_{k})$. This contradicts Corollary~\ref{Cor1}, since $B
\in \mathcal{B(T}_{\leq 4}) \subset \mathcal{B(T}_{\leq 3})$.
\end{enumerate}

This completes the proof of the theorem.
\end{proof}

\subsection{Blocking SSTs of Diameter at most $ 3$ is Insufficient}
\label{sec:sub:example}

\begin{figure}[tb]
\begin{center}
\scalebox{1.0}{
\includegraphics{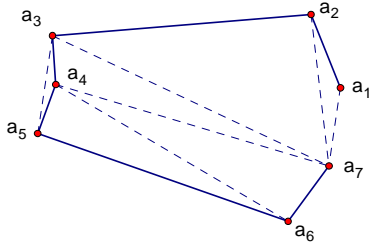}
} \caption{An example of a subgraph $B$ of a complete geometric
graph $G$ that blocks all SSTs of diameter at most $ 3$, but
avoids an SST $T$ of diameter 4. The full lines represent edges of
$B$, and the dotted lines represent edges of $T$.}
\label{Fig:Example}
\end{center}
\end{figure}

The example presented in Figure~\ref{Fig:Example} shows that
blocking SSTs of diameter at most $ 3$ is not sufficient even for
blocking SSTs of diameter 4. In the example, it is clear that $T$
(whose edges are represented by dotted lines) is an SST of
diameter 4, and that the path $B$ (whose edges are represented by
full lines) avoids it. It is also clear that $B$ blocks any SST of
diameter 2, since such SSTs are stars, and $B$ is a spanning
subgraph of $G$. In order to prove that $B$ meets all SSTs of
diameter 3, we show that no edge in $E(G)$ can be the {\it central
edge} of an SST of diameter 3 that avoids $B$. We do this using
the following observation.
\begin{lemma}\label{Obs4}
Suppose $[x,y] \in E(G) \setminus E(B)$. If there exist $z,w \in
V(G)$, such that:
\begin{enumerate}
\item The points $x,y,z,w$ are distinct and in convex position,

\item $[x,w],[y,z] \in E(B)$, and

\item The segments $[x,y]$ and $[z,w]$ do not cross,
\end{enumerate}
then $[x,y]$ cannot be the central edge in an SST of diameter 3
that avoids $B$.
\end{lemma}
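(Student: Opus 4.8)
The plan is to argue by contradiction. Suppose that, contrary to the claim, there is a simple spanning tree $T$ of $G$ of diameter $3$ whose central edge is $[x,y]$ and which avoids $B$, i.e.\ $E(T)\cap E(B)=\emptyset$. I would exploit the rigid structure of a diameter-$3$ tree: by the definition of the central edge, every vertex of $G$ other than $x,y$ lies at distance exactly $1$ from $x$ or $y$ in $T$, and in a tree it cannot be adjacent to both (that would close a cycle $x$--$\cdot$--$y$--$x$). Hence each of the hypothesized vertices $z,w$ is joined in $T$ to exactly one of $x,y$.

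The next step is to pin down these attachments. Since $[y,z]\in E(B)$ and $T$ avoids $B$, we have $[y,z]\notin E(T)$, so $z$ cannot be joined to $y$ and must be joined to $x$; thus $[x,z]\in E(T)$. Symmetrically, $[x,w]\in E(B)$ forces $[x,w]\notin E(T)$, so $w$ is joined to $y$ and $[y,w]\in E(T)$. Therefore $T$ is forced to contain both $[x,z]$ and $[y,w]$.

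The heart of the matter, and the step I expect to be the main obstacle, is to conclude from this that $T$ is not simple after all, by showing that $[x,z]$ and $[y,w]$ cross. Here I would use condition~(1): among the three ways of pairing the four points $x,y,z,w$ in convex position, exactly one pairing — the two diagonals of the quadrilateral $\mathrm{conv}\{x,y,z,w\}$ — consists of crossing segments, while the other two pairings are pairs of opposite sides and are non-crossing. Condition~(3) says that $\{[x,y],[z,w]\}$ is a non-crossing pairing, hence a pair of opposite sides; and the pairing $\{[x,w],[y,z]\}$ is non-crossing as well, since both of these segments are edges of the (non-crossing) subgraph $B$. By elimination, the remaining pairing $\{[x,z],[y,w]\}$ must be the pair of diagonals, so $[x,z]$ and $[y,w]$ cross. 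As both lie in $T$, this contradicts the simplicity of $T$, and so no such $T$ exists, proving that $[x,y]$ cannot be the central edge of a diameter-$3$ SST avoiding $B$.

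The delicate point to get right is precisely this last crossing claim: condition~(3) by itself only forces $\{x,y\}$ and $\{z,w\}$ to occupy opposite sides of the quadrilateral and does \emph{not} by itself distinguish whether $[x,z],[y,w]$ are the diagonals or merely the other pair of opposite sides. One genuinely needs the non-crossing of the two $B$-edges $[x,w]$ and $[y,z]$ to fix the cyclic order of $x,y,z,w$ and thereby force $[x,z],[y,w]$ to be the diagonals. I would therefore state that non-crossing explicitly, since the crossing of $[x,z]$ and $[y,w]$ is exactly what drives the whole contradiction.
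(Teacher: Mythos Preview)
Your proof is correct and follows essentially the same route as the paper: assume such a $T$ exists, force $[x,z],[y,w]\in E(T)$ from the avoidance of $B$, and then use the convex-position pigeonhole (exactly one of the three pairings is the pair of diagonals) to conclude these two edges cross. You are in fact slightly more careful than the paper in one spot: you make explicit that the non-crossing of $[x,w]$ and $[y,z]$ is what pins down the cyclic order, and you justify it by the simplicity of $B$ (which holds in the context, since $B$ there is a simple path), whereas the paper simply asserts that $\{[x,w],[y,z]\}$ do not cross without further comment.
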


\begin{proof}
Assume, on the contrary, that $[x,y]$ is the central edge of such
an SST $T$. Then $z$ and $w$ must be at distance 1 in $T$ from
$[x,y]$. As $[x,w],[y,z] \in E(B)$ and $T$ avoids $B$, this can
happen only if $[x,z],[y,w] \in E(T)$. However, since $x,y,z,w$
are in convex position and the pairs of edges $\{[x,y],[z,w]\}$
and $\{[x,w],[y,z]\}$ do not cross, the pair $\{[x,z],[y,w]\}$
must cross, contradicting the assumption that $T$ is simple.
\end{proof}

It can be seen, by checking all pairs $(i,j)$ with $1 \leq i<j
\leq 7$, that in the example, no edge $[a_i,a_j]$ can be the
central edge of an SST $T$ of diameter 3 that avoids $B$, since
for any edge $[a_i,a_j]$, at least one of the following holds:
\begin{enumerate}
\item $[a_i,a_j] \in E(B)$. This happens when $j=i+1$.

\item There exists a $k$ such that $[a_i,a_k] , [a_j,a_k] \in
E(B)$ (and thus, $a_k$ cannot be at distance 1 in $T$ from
$[a_i,a_j]$). This happens when $j=i+2$.

\item The vertices $x=a_i,y=a_j,z=a_{j+1},$ and $w=a_{i-1}$
satisfy the conditions of Lemma~\ref{Obs4}. This happens when
$1<i$ and $i+3 \leq j<7$. Note that in this case we never obtain
$\{3,4,5\} \subset \{i-1,i,j,j+1\}$.

\item The vertices $x=a_i,y=a_j,z=a_{j-1},$ and $w=a_{i+1}$
satisfy the conditions of Lemma~\ref{Obs4}. This happens when $1
\leq i < i+3 \leq j \leq 7$, and $i=1$ or $j=7$ (or both).
\end{enumerate}

\noindent Therefore, $B$ blocks all SSTs of diameter 3, as
asserted.

\medskip

\noindent We note that the example can be enlarged arbitrarily:
the edges $[a_1,a_2]$ and $[a_6,a_7]$ can be replaced by longer
convex polygonal arcs.

\section{The Converse Direction}
\label{sec:other-direction}

In this section we prove Theorem~\ref{Thm:SSS}, which is an
improved variant of the converse direction of Theorem~\ref{Thm:Main}.
\begin{theorem}\label{Thm:Converse}
Let $G$ be a complete geometric graph, and let $B \subset G$ be a
comb in $G$. Then $B$ meets every simple spanning subgraph of $G$.
\end{theorem}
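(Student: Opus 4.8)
\noindent The plan is to argue by contradiction: suppose $H$ is a simple (non-crossing) spanning subgraph of $G$ with no isolated vertex such that $E(H)\cap E(B)=\emptyset$, and derive a crossing inside $H$. A preliminary reduction disposes of the degenerate situations for free. If the spine $P$ consists of a single edge then $n=2$ and $B=G$, so the claim is trivial; and if $B$ happens to be a star with center $c$ (the case in which all teeth are attached to a unique interior spine vertex), then avoiding $B$ means $H$ contains no edge at $c$, so $c$ is isolated in $H$ — contradicting the hypothesis on $H$. Hence from now on I may assume that $B$ is a genuine comb, with spine $P=\langle p_0,p_1,\dots,p_m\rangle$ ($m\ge 3$) whose terminal edges $[p_0,p_1]$ and $[p_{m-1},p_m]$ are boundary edges, and with at least one tooth attached to an interior spine vertex.

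The key geometric tool is a separation statement extracted from Condition~3 of the definition of a comb. For a spine edge $[p_j,p_{j+1}]$ the line $l(p_j,p_{j+1})$ supports $\mathrm{conv}(G)$, so all of $V(G)$ lies in one closed half-plane and the condition is automatic; the content is in the teeth. For a tooth $[v,p_i]$ the line $l(v,p_i)$ passes through the boundary vertex $p_i$ and, by Condition~3, crosses no edge of $B$; since the spine is a convex arc, $l(v,p_i)$ can meet the boundary of $\mathrm{conv}(G)$ a second time only on the complementary arc, and therefore it \emph{strictly separates the prefix} $\{p_0,\dots,p_{i-1}\}$ \emph{from the suffix} $\{p_{i+1},\dots,p_m\}$ of the spine. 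Applying Condition~3 to the remaining teeth then pins every tooth vertex into the wedge at its attachment point bounded by the two incident spine lines, so that $B$ is a \emph{monotone} caterpillar running along the boundary. This is exactly the configuration in which Observation~\ref{Obs:Basic} was exploited in the forward direction, and I intend to run that mechanism in reverse.

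With this structure in hand the contradiction is produced by an extremal sweep along the spine. Since $[p_0,p_1],\dots,[p_{m-1},p_m]\notin E(H)$, each boundary ``step'' of the spine must instead be bridged by $H$ through chords; the separation statement forces any chord of $H$ joining a prefix vertex to a suffix vertex of a fixed spine gap to pass to the interior, and minimality (choosing the innermost such chord over all gaps) produces two chords lying on opposite sides of a separating line that nonetheless must be simultaneously present in $H$, hence cross — contradicting that $H$ is simple. The tooth-free convex instance already exhibits this in miniature: to avoid a boundary Hamiltonian path one is driven onto two diagonals that cross, as in the case $n=4$. The main obstacle, and the step I expect to need the most care, is precisely the presence of teeth: a tooth vertex may a priori be covered in $H$ by an edge pointing in many directions, and one must use the wedge confinement above to show that every admissible completion still creates the forced crossing. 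A more mechanical alternative sidesteps this by induction on $n$: delete an interior-vertex tooth-leaf $v$, which leaves a smaller comb $B'$ in $G'=G\setminus\{v\}$ since the convex hull and the spine are unchanged, and repair $H\setminus\{v\}$ into a simple spanning subgraph of $G'$ avoiding $B'$ by re-attaching, through a single non-crossing edge outside $E(B')$, any neighbour of $v$ that has become isolated; the base case is the tooth-free convex comb treated above. Here the delicate point is guaranteeing that such a repairing edge can always be chosen both non-crossing and outside $B'$.
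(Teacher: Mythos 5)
There is a genuine gap --- in fact both of your proposed routes leave the crux unresolved, and the first rests on a false premise. Your ``extremal sweep'' assumes that, since the spine edges are missing from $H$, each spine gap ``must be bridged by $H$ through chords.'' But a simple spanning subgraph is only required to be non-crossing with no isolated vertices; it need \emph{not} be connected, so nothing forces $H$ to contain any spine-to-spine chord at all. The hardest case is precisely when every spine vertex is covered in $H$ by an edge going to a tooth vertex, and your sweep mechanism says nothing concrete about it --- you yourself flag this (``the main obstacle \dots is precisely the presence of teeth'') and do not resolve it. Your separation statement (each tooth line separates the spine prefix from the suffix) is correct, but by itself it does not produce a crossing. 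The paper closes exactly this case with two ideas absent from your sketch: (i) a reduction step --- if $H$ contains a spine-to-spine edge, take one $[a_{i_0},a_{j_0}]$ minimizing $j-i$ and restrict $G,B,H$ to $\mathrm{conv}(\{a_{i_0},\dots,a_{j_0}\})$, after which one may assume $H$ has no spine-to-spine edges; and (ii) a pigeonhole argument --- for each interior spine vertex $a_i$ pick an $H$-edge $[a_i,y]$, note that $y$ is then necessarily a tooth of $B$ attached to some interior spine vertex $a_{f(i)}$, check $f(i)\neq i$, $f(0)>0$, $f(k)<k$, so some $i$ has $f(i)>i$ and $f(i+1)<i+1$, and then a convex-position argument shows the two $H$-edges chosen at $a_i$ and $a_{i+1}$ must cross. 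This descent-plus-crossing mechanism is the heart of the proof and has no counterpart in your proposal.

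The inductive alternative does not repair this. Its repair step --- after deleting a tooth leaf $v$, re-attach each neighbour of $v$ that became isolated by a non-crossing edge outside $E(B')$ compatible with $H\setminus\{v\}$ and with the other repair edges --- is exactly the delicate point you admit you cannot justify, and there is no a priori reason such edges exist: an isolated neighbour $u$ sits in some face of the non-crossing graph $H\setminus\{v\}$, and the vertices it can legally reach may reduce to its own attachment vertex $a_{f(u)}$, which is forbidden. Establishing that the repair is always possible is essentially of the same difficulty as the theorem itself. Moreover, your base case (the tooth-free convex comb) is verified only for $n=4$; for general $n$ it also needs the reduction of item (i) above or an equivalent argument. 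Finally, a small inaccuracy: a comb whose teeth all attach to a single interior spine vertex is a star only when the spine has exactly three vertices; for longer spines it is not a star, so that preliminary case split is not quite the dichotomy you state (though this is immaterial, since Theorem~\ref{Thm:Converse} concerns combs only).
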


\begin{proof}
Assume, on the contrary, that $H$ is a simple spanning subgraph of
$G$ such that $E(H) \cap E(B) = \emptyset$. Denote the spine of
$B$ by $\langle a_{-1}, a_0, a_1, \ldots, a_k, a_{k+1} \rangle$.
First we show that there is no loss of generality in assuming that
$H$ does not contain edges of the form $[a_i,a_j]$, where $-1 \leq
i,j \leq k+1$ (except, possibly, $[a_{-1},a_{k+1}]$).

Assume that $H$ contains such edges, and let $[a_{i_0},a_{j_0}]
\in H$, where $i_0<j_0$, be such an edge that minimizes the
difference $j-i$. Consider the subgraph of $G$ defined by:
\[
G_1=G \cap \mathrm{conv}(\{a_{i_0},a_{i_0+1},\ldots,a_{j_0}\}).
\]
Note that $H$ does not contain edges that connect vertices in
$V(G_1) \setminus \{a_{i_0},a_{j_0}\}$ with vertices in $V(G)
\setminus V(G_1)$, as such edges would cross the edge
$[a_{i_0},a_{j_0}]$, contradicting the simplicity of $H$. Thus,
the restriction of $H$ to
$\mathrm{conv}(\{a_{i_0},a_{i_0+1},\ldots,a_{j_0}\})$, i.e., $H_1
= H \cap G_1$, is a simple spanning subgraph of $G_1$. Similarly,
it is clear that $B_1 = B \cap G_1$ is a simple spanning
caterpillar of $G_1$ such that the line spanned by an edge $e$ of
$B_1$ never crosses another edge of $B_1$. Finally, $H_1$ does not
contain edges of the form $[a_i,a_j]$ for $i_0 \leq i,j \leq j_0$
(except for $[a_{i_0},a_{j_0}]$) by the minimality of the edge
$[a_{i_0},a_{j_0}]$. Thus, by reduction to $G_1,B_1$, and $H_1$,
we can assume w.l.o.g. that $H$ does not contain edges of the form
$[a_i,a_j]$ (except, possibly, for $[a_{-1},a_{k+1}]$).

\begin{figure}[tb]
\begin{center}
\scalebox{0.8}{
\includegraphics{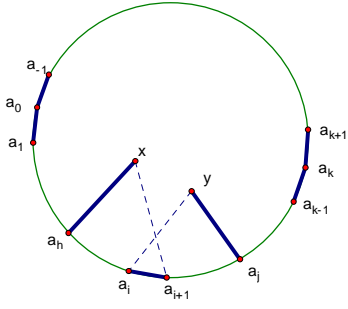}
} \caption{An illustration to the proof of
Theorem~\ref{Thm:Converse}. The edges of $B$ are represented by
full lines, and the edges of $H$ are represented by dotted lines.}
\label{Fig:Converse}
\end{center}
\end{figure}

Now, we define a function $f:\{0,1,\ldots,k\} \rightarrow
\{0,1,\ldots,k\}$ by the following procedure, performed for each
$0 \leq i \leq k$.
\begin{enumerate}
\item Consider the vertex $a_i$. Since $H$ is a spanning subgraph,
there exist edges in $E(H)$ that emanate from $a_i$. Pick one such
edge $[a_i,y]$. Note that $y \neq a_l$ for $-1 \leq l \leq k+1$,
since by assumption, $H$ does not contain edges of the form
$[a_i,a_l]$.

\item Since $y \neq a_l$ for all $-1 \leq l \leq k+1$, $y$ is a
leaf of $B$. Hence, the only edge of $B$ that emanates from $y$
connects it to an interior vertex of the spine of $B$, i.e., is of
the form $[y,a_j]$ for some $0 \leq j \leq k$.

\item Define $f(i)=j$, where $j$ is determined by the two previous
steps.
\end{enumerate}

Note that we indeed have $0 \leq f(i) \leq k$ for all $i$, since
$a_{-1}$ and $a_{k+1}$ are leaves of $B$, and thus are connected
only to $a_0$ and $a_k$, respectively. Also note that $f(i) \neq
i$ for all $i$, since otherwise, $B$ and $H$ would share the edge
$[a_i,y]$, for some $y$.

Consequently, we have $f(0)>0$ and $f(k)<k$, and thus, there
exists an $i$, $0 \leq i \leq k$, such that $j=f(i)>i$ and
$h=f(i+1)<i+1$. Denote the vertices that were used in the
generation of $f(i)$ and of $f(i+1)$ by $y$ and $x$, respectively,
as illustrated in Figure~\ref{Fig:Converse}. We claim that the
edges $[a_i,y]$ and $[a_{i+1},x]$ cross, which contradicts the
assumption that $H$ is simple.

In order to prove this claim, consider the following polygon:
\[
P= \langle
x,a_h,a_{h+1},\ldots,a_{i-1},a_i,a_{i+1},\ldots,a_{j-1},a_j,y,x
\rangle.
\]
(Note that the highest possible value of $h$ is $i$. If $h=i$,
then there are no edges between $a_h$ and $a_i$. Similarly for
$a_{i+1}$ and $a_{j}$.) We claim that the path $P_0=\langle x,
a_h,\ldots,a_i,a_{i+1},\ldots,a_j,y \rangle$ lies on the boundary
of $\mathrm{conv}(P)$. Indeed, all the edges of $P_0$ except for
$[x,a_h]$ and $[a_{j},y]$ lie on the boundary of
$\mathrm{conv}(G)$, so they clearly support $P$. The edge
$[a_h,x]$ also supports $P$, since the line $l(a_h,x)$ meets the
path $\langle a_h,\ldots,a_i,a_{i+1},\ldots,a_j,y \rangle$ only at
its endpoint $a_h$. For similar reasons, $[a_j,y]$ must also
support $P$. It follows that the path $P_0$ is part of the
boundary of $\mathrm{conv} (P)$.

Finally, $a_{i+1}$ lies on the boundary of $\mathrm{conv} (P)$
strictly between $a_i$ and $y$, and $a_i$ lies on the boundary of
$\mathrm{conv} (P)$ strictly between $a_{i+1}$ and $x$. This
implies that the two edges $[a_i,y]$ and $[a_{i+1},x]$ must cross.
This contradicts the assumption that $H$ is simple, and thus
completes the proof of the theorem.
\end{proof}


\begin{thebibliography}{99}
\bibitem{Caterpillar0} F. Harary and A.J. Schwenk, Trees with
Hamiltonian Square, {\it Mathematika} \textbf{18} (1971),
pp.~138–-140.

\bibitem{Caterpillar1} F. Harary and A.J. Schwenk, The Number of Caterpillars,
{\it Disc. Math.} \textbf{6} (1973), pp.~359--365.

\bibitem{Hernando} M. Carmen Hernando, Complejidad de Estructuras
Geom$\mathrm{\acute{e}}$tricas y Combinatorias, Ph.D. Thesis,
Universitat Polit$\mathrm{\acute{e}}$ctnica de Catalunya, 1999 (in
Spanish). Available online at:
http://www.tdx.cat/TDX-0402108-120036/

\bibitem{Pach} G. K$\mathrm{\acute{a}}$rolyi, J. Pach, and
G. T$\mathrm{\acute{o}}$th, Ramsey-type Results for Geometric
Graphs I, {\it Discrete and Computational Geometry}, \textbf{18}
(1997), pp.~247--255.

\bibitem{Maamar-Master} C.~Keller and M. A.~Perles, On the
Smallest Sets Blocking Simple Perfect Matchings in a Convex
Geometric Graph, {\it Israel J. of Math.}, in press. Available
on-line at:
http://arxiv.org/abs/0911.3350.

\bibitem{Co-blockers} C.~Keller and M. A.~Perles, Characterization of
Co-Blockers for Simple Perfect Matchings in a Convex Geometric
Graph, submitted. Available online at:
http://arxiv.org/abs/1011.5883.

\bibitem{Rivera} J. J.~Montellano-Ballesteros and E.~Rivera-Campo,
On the Heterochromatic Number of Hypergraphs Associated to
Geometric Graphs and to Matroids, submitted. Available online at:
http://arxiv.org/abs/1011.4888.

\bibitem{Pach2} J.~Pach, Geometric Graph Theory, Chapter 10 in:
Handbook of Discrete and Computational Geometry, Second Edition,
J. E. Goodman and J. O'Rourke, eds., CRC Press, Boca Raton, 2004,
pp.~219--238.

\bibitem{Micha} M. A.~Perles, unpublished, 1987.

\end{thebibliography}
\end{document}